\definecolor{red}{RGB}{255,0,0}
\newcommand{\R}{\mathbb{R}}
\newcommand{\E}{\mathbb{E}}
\newcommand{\N}{\mathbb{N}}
\newcommand{\Z}{\mathbb{Z}}
\newcommand{\PR}{\mathbb{P}}
\newcommand{\B}{\mathcal{B}}
\newcommand{\1}{\textbf{1}}
\newcommand{\dd}{ \mathrm{d}}
\newcommand{\vn}[1]{\left| \! \left| #1\right| \! \right|} 
\newcommand{\tn}[1]{\left| \! \left| \! \left|#1\right| \! \right| \! \right|}
\numberwithin{equation}{section}
\newtheorem{Def}{Definition}[section]
\newtheorem{The}[Def]{Theorem}
\newtheorem{Lem}[Def]{Lemma}
\newtheorem{Cor}[Def]{Corollary}
\newtheorem{Prop}[Def]{Proposition}
\newtheorem{Ass}[Def]{Assumption}
\title{article about Masterthesis}
\author{Richard Kraaij}
\begin{document}

\author{Richard Kraaij}

\begin{center}
{\Large Stationary product measures for conservative particle systems and ergodicity criteria}
\bigskip

Richard Kraaij\footnote{Delft Institute of Applied Mathematics,
Technical University Delft,
Mekelweg 4, 2628 CD Delft, the Netherlands,
 E-mail \texttt{r.c.kraaij@tudelft.nl}}

\bigskip

\today
\end{center}

\begin{abstract}
We study conservative particle systems on $W^S$, where $S$ is countable and $W = \{0, \dots, N\}$ or $W = \N$, where the generator reads
\begin{equation*}
Lf(\eta) = \sum_{x,y} p(x,y) b(\eta_x,\eta_y) (f(\eta - \delta_x + \delta_y) - f(\eta)).
\end{equation*}
Under assumptions on $b$ and the assumption that $p$ is finite range, which allow for the exclusion, zero range and misanthrope processes, we show exactly what the stationary product measures are. 

Furthermore we show that a stationary measure $\mu$ is ergodic if and only if the tail sigma algebra of the partial sums is trivial under $\mu$. This is a consequence of a more general result on interacting particle systems that shows that a stationary measure is ergodic if and only if the sigma algebra of sets invariant under the transformations of the process is trivial. We apply this result combined with a coupling argument on the stationary product measures to determine which product measures are ergodic. For the case that $W$ is finite this gives a complete characterisation.

For the case that $W = \N$ we show that for nearly all functions $b$ a stationary product measure is ergodic if and only if it is supported by configurations with an infinite amount of particles. We show that this picture is not complete, we give an example of a system where $b$ is such that there is a stationary product measure which is not ergodic, even though it concentrates on configurations with an infinite number of particles.

\end{abstract}

\section{Introduction}

For the exclusion, inclusion, zero range and misanthrope process \cite{BookLiggett1,CocozzaMisanthrope,AndjelZR, GroRedVafInc} there is a long history of research into the stationary and ergodic measures. For  the exclusion process it is known for a long time that the model has invariant product measures which are indexed by the particle density per site. It was shown that the model has stationary measures which have a constant density and that there are measures which are indexed with a parameter $(\lambda_x)_{x \in S}$ that is reversible with respect to the random walk kernel $p$, i.e. $\lambda_x p(x,y) = \lambda_y p(y,x)$, see e.g. \cite{BookLiggett1}. 

This picture was shown to be true for other models as well \cite{CocozzaMisanthrope, GroRedVafInc}. For the zero range process however this picture was not complete as was shown in Andjel \cite{AndjelZR}. The underlying parameters $\lambda$ for a product measure in case of the zero range process were only required to satisfy $\sum_x \lambda_x p(x,y) = \lambda_y$. In 2005 Bramson and Liggett \cite{LiggettBramson} extended the picture for the exclusion process by showing that product measures for which $(\lambda_x)_{x \in S}$ satisfies $\sum_x \lambda_x p(x,y) = \lambda_y$ and if $\lambda_x p(x,y) \neq \lambda_y p(y,x)$ then $\lambda_x = \lambda_y$ are stationary as well.

\bigskip

The problem of finding \textit{all ergodic measures} for such systems is still open. Progress has been made to classify which stationary product measures are ergodic. For the exclusion process this process was solved in Jung \cite{Jung}, for the zero range process the problem is solved in Sethuraman \cite{SSEx} with additional conditions on the interaction function $g$. Also for the misanthrope process and the inclusion process ergodicity problems regarding product measures are solved \cite{AnCoRoMisanthrope, GroRedVafInc}. For different models different methods are being used, Sethuraman \cite{SSEx} however, uses an approach that works for a range of models. 

\bigskip

In this paper show in  that these questions can dealt with regardless of the specific model, i.e. we will work with systems with a generator of the form $Lf(\eta) = \sum_{x,y} p(x,y) b(\eta_x,\eta_y) (f(\eta - \delta_x + \delta_y) - f(\eta))$ where $p$ is finite range and where function $b$ depends on the model that we are working with. We will take $b$ bounded for convenience, but the methods are not restricted to this case.

\bigskip

We start in section \ref{sec:proofergo} by proving that a stationary measure $\mu$ is ergodic if and only if the tail sigma algebra of the partial sums is trivial under $\mu$. In fact this is a consequence of a result that is valid for more general interacting particle systems(IPS). We will show that a stationary measure $\mu$ for a general IPS is ergodic if and only if the sigma algebra of sets that are invariant under the possible transformations of the system is trivial under $\mu$. This result also shows that stationary measures for Glauber dynamics are ergodic if and only if they are tail trivial.

\bigskip

In section \ref{sect:product} we will address the question of stationarity of product measures. We show that the idea of Bramson and Liggett \cite{LiggettBramson} extends to other models and that the structure of the set of stationary invariant measures depends crucially on the structure of the function $b$. If you put more restrictions on $b$ less parameter sets $\lambda$ will yield a stationary product measure. Also we show that these are exactly the stationary product measures of this type and no more can be found.

\bigskip

After that we apply these results in section \ref{sect:proof} to show which product measures are ergodic. We use a coupling proof to extend the results of Jung \cite{Jung} to the case where $W = \{0, \dots, N\}$, hence completely resolving the question if $W$ is finite. We use the same techniques to show similar results for the case that $W = \N$. In this case however we find some interesting behaviour. For most functions $b$ we see that a product measure is ergodic if it has zero mass on configurations with a finite number of particles, this behaviour is consistent with the behaviour found for the zero range process \cite{SSEx}. For certain functions however this behaviour breaks down, as we illustrate in section \ref{sec:deterministicprofile} with  an example of a system where we have a stationary, but non ergodic, product measure which concentrates on configurations with an infinite amount of particles but which follows a certain increasing deterministic profile.

\subsection{Main Results} \label{sec:notanddef}

Let $E = W^S$ be the set of configurations $(\eta_x)_{x \in S}$ for a countable set $W$ and a countable set $S$. Let $\mathcal{B}$ be the product $\sigma$-algebra. For example the exclusion process is defined on $\{0,1\}^S$, the zero range process on $\N^S$ and the stochastic Ising model on $\{-1,1\}^S$. By $\eta(t) = (\eta(t))_{i \in S}$ we describe the configuration of the process at time $t$. We order $S$ by a bijection $\phi : S \rightarrow \N$, so $i < j$ if $\phi(i) < \phi(j)$. Using this ordering define $S_n = \{x \in S \; : \; \phi(x) \leq n \}$ and $\mathcal{B}_n = \sigma \{\eta_i \; : \; i \in S_n \}$.
Define
\begin{equation*}
\Delta_f(i) = \sup \left\{|f(\eta) - f(\zeta)| \; : \; \text{ for } j \neq i: \; \eta_j = \zeta_j \right\}
\end{equation*}
the variation of $f$ at $i \in S$. Define the space of test functions by
\begin{equation}
D = \left\{f \in C_b(E) \; : \; \tn{f} := \sum_{x \in S} \Delta_f(x) < \infty \right\}. \label{eqn:defD}
\end{equation}
Define $\eta^{x,y} = \eta - \delta_x + \delta_y$ and let $\nabla_{x,y}f(\eta) = f(\eta^{x,y}) - f(\eta)$
For $f \in D$ we define 
\begin{equation*}
L^{b,p}f(\eta) = \sum_{x,y} p(x,y) b(\eta_x,\eta_y) \nabla_{x,y}f(\eta).
\end{equation*}
Note that for $W = \{0,1\}$ and $b(n,k) = n(1-k)$ we obtain the exclusion process and that for $W = \N$ and $b(n,k) = g(n)$ we obtain the zero range process. We will refer to $b$ as the rate function and to this class of processes by the name product type processes.
We will assume that
\begin{Ass} \label{Ass:Cp}
$p$ is finite range and
\begin{equation*}
\sup_x \sum_y(p(x,y)+p(y,x)) = C_p < \infty.
\end{equation*}
\end{Ass}

We also make the following assumption.

\begin{Ass} \label{Ass:irreducible}
$p$ is irreducible and $b$ is positive except for the two cases $b(0,\cdot) = 0$ and if $W = \{0, \dots, N\}$: $b(\cdot,N) = 0$
\end{Ass}

In the case that $W$ is a finite set we know by theorem I.3.9 in Liggett\cite{BookLiggett1} that there exists a process $\eta(t)$ and semigroup $S_t :C(E) \rightarrow C(E)$ corresponding to $L^{b,p}$. With the same techniques it is not hard to show that there is a process $\eta(t)$ and semigroup $S_t : \overline{D} \rightarrow \overline{D}$ in the case that $W = \N$ and $b$ is bounded. In both cases $D$ is a core for $L^{b,p}$. Note that in the case that $W=\N$ it is not the case that $\overline{D} = C(E)$. It seems that $\overline{D}$ which is the uniform closure of bounded local functions is the natural space to work with. 

The zero range process has been constructed also for unbounded $b$ in Andjel \cite{AndjelZR}, the results that we obtain in this article do not improve upon the results of Sethuraman \cite{SSEx} with respect to the zero range process, so we will not deal with with this construction. The methods developed here apply to the zero range process as the methods are valid regardless of the structure of $b$. 

\bigskip

For a more general interacting particle system we follow the notation of Liggett \cite{BookLiggett1}. For $T$ a finite subset of $S$ and $\zeta \in W^T$ let $c_T(\eta,\zeta)$ be the rate at which the system makes a transformation from configuration $\eta$ to configuration $\theta_{T,\zeta}(\eta)$ which is defined by
\begin{equation*}
\theta_{T,\zeta}(\eta)_i =
  \begin{cases}
   \eta_i   & \text{if } i \notin T \\
   \zeta_i  & \text{if } i \in T
  \end{cases}
\end{equation*}
and put $c_T = \sup\{c_T(\eta,W^T) \; : \; \eta \in E\}$. Lastly define $\nabla_{T,\zeta}f(\eta) = f(\theta_{T,\zeta}(\eta)) - f(\eta)$. 

For functions $f \in D$ define $L$ to be
\begin{equation} \label{eqn:defL}
Lf(\eta) = \sum_T \int c_T(\eta,\dd \zeta) \nabla_{T,\zeta}f(\eta).
\end{equation}
If we assume $W$ to be finite theorem I.3.9 in Liggett \cite{BookLiggett1} gives that $L$ generates a Markov process $\eta(t)$ and semigroup $S_t : C(E) \rightarrow C(E)$ for which $D$ is a core. One of the two assumptions for this theorem to hold is
\begin{Ass} \label{Ass:C}
\begin{equation*}
\sup_x \sum_{T \ni x} c_T = C < \infty
\end{equation*}
\end{Ass}
we state this assumption as we need it for calculations later on. Note that in the particular case of product type systems assumption \ref{Ass:Cp} implies assumption \ref{Ass:C}.

\bigskip

Furthermore we define the set of stationary measures for the process generated by $L$ by $\mathcal{I}(L)$, proposition 4.9.2 in Ethier and Kurtz \cite{EthierKurtz} shows that
\begin{equation} \label{eqn:I}
\mathcal{I}(L) = \left\{\mu \; : \; \int Lf \dd \mu =0 \quad \forall \; f \in D \right\}.
\end{equation}

We start with stating the result on ergodicity. 

\subsection{Ergodic measures for general IPS} \label{sect:ergodicmeasuresgeneral}

In this section we work with a generator $L$ that is given by equation (\ref{eqn:defL}).
For the results that follow we need the following assumption.

\begin{Ass} \label{ass:wederkerigheid}
For $\eta \in E$, $T \subset S$ a finite set, $\zeta \in W^T$ such that $c_T(\eta,\zeta) > 0$ there is a $n \in \N$, there are finite sets $T_1, \dots, T_n \subset S$ and $\zeta_1 \in W^{T_1}, \dots, \zeta_n \in W^{T_n}$ such that for all $i \leq n$:
\begin{equation*}
c_{T_i}\left(\theta_{T_{i-1},\zeta_{i-1}} \circ \dots \circ \theta_{T_{1},\zeta_{1}} \circ \theta_{T,\zeta} (\eta), \zeta_i \right) > 0
\end{equation*}
and
\begin{equation*}
\theta_{T_{n},\zeta_{n}} \circ \dots \circ \theta_{T_{1},\zeta_{1}} \circ \theta_{T,\zeta} (\eta) = \eta
\end{equation*}
\end{Ass}
This assumptions states that if the Markov process allows the transformation from $\eta$ to $\theta_{T,\zeta}(\eta)$ then there is a sequence of possible transformations that returns the configuration to $\eta$. Under this assumption we can define the following $\sigma$-algebra.

\begin{Def} \label{Def:GL}
For a generator $L$ define the $\sigma$-algebra $\mathcal{G}_L$ of sets that are invariant under transformations of the process generated by $L$. That means that if $G \in \mathcal{G}_L$ and $\eta \in G$, $T \subset S$ finite, $\zeta \in W^T$ such that $c_T(\eta,\zeta) > 0$ then $\theta_{T,\zeta}(\eta) \in G$.
\end{Def}
Note that by assumption \ref{ass:wederkerigheid} $\mathcal{G}_L$ is a $\sigma$-algebra. We now state the main theorem of this section.

\begin{The} \label{the:ergoprime}
If $L$ generates a Markov process and $\mu \in \mathcal{I}(L)$, then $\mu$ is ergodic if and only if $\mathcal{G}_L$ is trivial under $\mu$.
\end{The}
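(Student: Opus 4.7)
The plan is to prove the two implications separately; the central issue is to reconcile the pointwise nature of $\mathcal{G}_L$-invariance with the a.s./$L^2(\mu)$ nature of semigroup invariance. For the easy direction, assume $\mu$ is ergodic and pick $G \in \mathcal{G}_L$. By the definition of $\mathcal{G}_L$, every allowed transition $\theta_{T,\zeta}$ with $c_T(\eta,\zeta) > 0$ preserves $G$; since Assumption \ref{ass:wederkerigheid} makes reachability under allowed transitions an equivalence relation, $G^c$ is also preserved by every allowed transition. Consequently the Markov process started at $\eta$ stays on the same side of $G$ for all time, which gives $S_t \mathbf{1}_G = \mathbf{1}_G$ pointwise, and ergodicity of $\mu$ then forces $\mu(G) \in \{0,1\}$.

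For the converse, assume $\mathcal{G}_L$ is $\mu$-trivial and, for a contradiction, that $\mu$ is not ergodic. Choose a measurable $A$ with $0 < \mu(A) < 1$ such that $S_t \mathbf{1}_A = \mathbf{1}_A$ in $L^2(\mu)$ for every $t \geq 0$. I would first upgrade this $L^2$-level invariance to a trajectory-level statement: stationarity gives $\PR^\mu(\mathbf{1}_A(\eta(t)) \neq \mathbf{1}_A(\eta(0))) = 0$ for each rational $t$, and the c\`adl\`ag, piecewise-constant structure of IPS trajectories, combined with a countable intersection over rationals and right-continuity, yields $\mathbf{1}_A(\eta(t)) = \mathbf{1}_A(\eta(0))$ for all $t \geq 0$, $\PR^\mu$-a.s. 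Define
\[A_0 = \{\eta : \PR^\eta(\eta(t) \in A \text{ for all } t \geq 0) = 1\};\]
then $A_0$ is measurable by regularity of the Markov kernel and $\mu(A_0) = \mu(A) \in (0,1)$.

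It remains to verify $A_0 \in \mathcal{G}_L$. Given $\eta \in A_0$ and $c_T(\eta,\zeta) > 0$, I would apply the strong Markov property at the first jump of the process starting from $\eta$ that passes through $\theta_{T,\zeta}(\eta)$; this event has strictly positive probability because, by Assumption \ref{Ass:C}, the aggregate rate of transitions affecting the finite set $T$ is bounded by $|T| C$, so the relevant first jump is well-defined even when the total outgoing rate is infinite. Since the trajectory from $\eta$ stays in $A$ almost surely, so must its post-jump continuation, which is a fresh copy of the process started at $\theta_{T,\zeta}(\eta)$; hence $\theta_{T,\zeta}(\eta) \in A_0$, contradicting triviality of $\mathcal{G}_L$. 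The main obstacle is exactly this transfer from a.s.\ invariance under $S_t$ to pointwise invariance under each individual allowed transition; once the localised first-jump argument and strong Markov are in place, Assumption \ref{ass:wederkerigheid} closes the argument.
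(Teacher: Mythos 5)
Your architecture is the same as the paper's (forward: invariance of $G$ under allowed transitions should force semigroup invariance of $\1_G$; converse: from semigroup invariance of $\1_A$ manufacture a $\mathcal{G}_L$-measurable modification of $A$), but both of your key steps treat the infinite-volume process as a pure jump process with finite activity, and that is exactly where the argument breaks. In the forward direction, the claim that the process started at $\eta\in G$ ``stays on the same side of $G$ for all time'' is false pointwise: in any positive time interval infinitely many transitions occur (each of infinitely many sites has a bounded but positive rate), so $\eta(t)$ is in general \emph{not} obtained from $\eta(0)$ by a finite composition of allowed transformations, and sets in $\mathcal{G}_L$ are only closed under finite compositions. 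For a spin-flip system with rates bounded below, take $G=\{\eta:\eta_x=1 \text{ for all but finitely many }x\}$; then $G\in\mathcal{G}_L$, yet started from the all-ones configuration the process leaves $G$ instantly, so $S_t\1_G\neq\1_G$ pointwise. What is actually needed (and suffices) is $S_t^\mu\1_G=\1_G$ in $\mathcal{L}_2(\mu)$, and the paper obtains this by approximating $L$ with the finite-volume generators $L^{(n)}$ --- for which your finite-activity reasoning is valid --- and then using $S_t^\mu(n)f\rightarrow S_t^\mu f$ in $\mathcal{L}_2(\mu)$.

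In the converse direction the same misconception is fatal to your strong Markov step: the event that the process started from $\eta$ ``passes through $\theta_{T,\zeta}(\eta)$'' has probability \emph{zero}, not positive probability. Before the first transition affecting the finite set $T$ occurs, infinitely many transitions elsewhere have already happened, so the configuration just after that jump is $\theta_{T,\zeta}(\eta')$ for some $\eta'$ agreeing with $\eta$ only on a neighbourhood of $T$; the post-jump law is therefore not a copy of the process started at $\theta_{T,\zeta}(\eta)$, and you cannot conclude $\theta_{T,\zeta}(\eta)\in A_0$. The paper circumvents the dynamics entirely at this point: from $S_t^\mu\1_A=\1_A$ it deduces $\1_A\in\mathcal{D}(L^\mu)$ with $L^\mu\1_A=0$, and then the quadratic-form identity $Q(\1_A)=R(\1_A)$ of proposition \ref{prop:Dirichletform} shows that the set of $\eta$ admitting an allowed transition that changes $\1_A$ is a $\mu$-null set $B_0$; removing such null sets iteratively and intersecting yields $A_\infty\in\mathcal{G}_L$ with $\mu(A_\infty)=\mu(A)$. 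Some substitute for that Dirichlet-form step --- a purely measure-theoretic statement about the static measure $\mu$ rather than about trajectories --- is needed to close your argument; assumption \ref{ass:wederkerigheid} and assumption \ref{Ass:C} alone do not rescue the first-jump construction.
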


We give two corollaries to this theorem regarding two examples, see corollary \ref{Cor:ergodicity} below.
The first class of examples are spin flip systems, with a generators that read
\begin{equation*}
Lf(\eta) = \sum_x r(x,\eta) (f(\eta^x) - f(\eta)) 
\end{equation*}
for some rate function $r$, where $W = \{-1,1\}$ and $\eta^x_y = \eta_y$ if $y \neq x$ and $\eta^x_x = - \eta_x$. Important examples are stochastic Ising models.

The second class of examples are conservative systems, of which the product type systems are a special case. Also Kawasaki dynamics belongs to this case.
\begin{equation*}
Lf(\eta) = \sum_{x,y} r(x,y,\eta) \nabla_{x,y} f(\eta)
\end{equation*}

\begin{Def}
We define the following $\sigma$-algebras.
\begin{enumerate}[(a)]
\item The tail $\sigma$-algebra: 
\begin{equation*}
\mathcal{T} = \bigcap_n \sigma \; \left(\eta_x \; : \; x \in S \text{ such that } \phi(x) \geq n \right)
\end{equation*} 
\item The tail $\sigma$-algebra of the partial sums: 
\begin{equation*}
\mathcal{H} = \bigcap_n \; \mathcal{H}_n = \bigcap_n \; \sigma \left(\sum_{\phi(x) \leq m} \eta_x \; : \; m > n \right)
\end{equation*}
\item Let $\mathcal{A}$ be the $\sigma$-algebra of events that are invariant under moving particles from one site to another.
\end{enumerate}
\end{Def}

First we show that the last two $\sigma$-algebras are equal
\begin{Lem} \label{Lem:AH}
It holds that $\mathcal{A} = \mathcal{H}$.
\end{Lem}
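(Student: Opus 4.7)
The plan is to prove both inclusions separately, the key leverage being a rewriting of the generators of $\mathcal{H}_n$ that makes its fibers transparent. For $\mathcal{H} \subseteq \mathcal{A}$, I would suppose $A \in \mathcal{H}$ and consider a particle move from $x$ to $y$. Choose any $n$ with $\phi(x), \phi(y) \leq n$; since $A \in \mathcal{H}_n$, membership in $A$ is determined by the partial sums $s_m(\eta) := \sum_{\phi(z) \leq m} \eta_z$ for $m > n$, and for every such $m$ both $x$ and $y$ lie in $S_m$, so $s_m(\eta^{x,y}) = s_m(\eta)$ and therefore $\eta \in A \iff \eta^{x,y} \in A$.

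For $\mathcal{A} \subseteq \mathcal{H}$, fix $n$ and show every $A \in \mathcal{A}$ lies in $\mathcal{H}_n$. Using $\eta_{\phi^{-1}(m)} = s_m - s_{m-1}$ for $m \geq n+2$ gives
\begin{equation*}
\mathcal{H}_n = \sigma\bigl( s_{n+1},\, (\eta_z)_{\phi(z) \geq n+2} \bigr);
\end{equation*}
denote the corresponding generating map by $\Phi_n$. Two configurations $\eta, \zeta$ with $\Phi_n(\eta) = \Phi_n(\zeta)$ coincide off $S_{n+1}$ and carry the same total mass $k$ on $S_{n+1}$, and such a pair can be transformed into one another by finitely many legal moves inside $S_{n+1}$: as long as $\eta|_{S_{n+1}} \neq \zeta|_{S_{n+1}}$, one can find $x, y \in S_{n+1}$ with $\eta_x > \zeta_x$ and $\eta_y < \zeta_y$ (so $\eta_x \geq 1$, and $\eta_y \leq N-1$ in the bounded case), and moving a single particle from $x$ to $y$ strictly decreases $\sum_{z \in S_{n+1}} |\eta_z - \zeta_z|$ by $2$. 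Hence membership in $A$ is constant on the fibers of $\Phi_n$.

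The final step, which I expect to be the main subtle point, is to promote fiber-constancy to genuine $\sigma(\Phi_n)$-measurability. I would do this by exhibiting a measurable section $\Psi$ of $\Phi_n$: given $\bigl(k, (e_z)_{\phi(z) \geq n+2}\bigr)$, let $\Psi$ pack the $k$ particles of $S_{n+1}$ greedily into $\phi^{-1}(1), \phi^{-1}(2), \dots$ (respecting the cap $N$ when $W = \{0, \dots, N\}$, where automatically $k \leq (n+1)N$) and copy $e_z$ on the rest. Then $\Psi \circ \Phi_n : E \to E$ is $\mathcal{H}_n$-measurable, and since $\eta$ and $\Psi(\Phi_n(\eta))$ lie in the same $\Phi_n$-fiber, fiber-constancy yields $\mathbf{1}_A = \mathbf{1}_A \circ \Psi \circ \Phi_n$, which is $\mathcal{H}_n$-measurable. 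Thus $A \in \mathcal{H}_n$ for every $n$, and hence $A \in \mathcal{H}$; the combinatorial reachability inside $S_{n+1}$ is routine, and the measurable-selection step is where care is needed.
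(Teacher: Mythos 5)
Your proof is correct and follows essentially the same route as the paper: both directions hinge on observing that $\mathcal{H}_n$ records exactly the mass on an initial block plus the configuration outside it, and on replacing $\eta$ by a canonical representative of its fiber (the paper piles all particles of $S_n$ onto the origin for $W=\N$; your greedy-packing section $\Psi$ is the same device, made explicit and adapted to the cap $N$). Your write-up merely supplies details the paper leaves implicit — the finite chain of legal moves connecting two configurations in a fiber, and the measurable-section step — so it is a more careful rendering of the same argument rather than a different one.
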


We use this information combined with the following irreducibility assumptions to obtain corollary \ref{Cor:ergodicity}.
\begin{Ass} \label{Ass:cirreducible}
In the case that we are working with a conservative particle system, we assume that $c$ is irreducible. This means that if we have two configurations $\eta$ and $\hat{\eta}$ such that there is a finite box $B \subset S$ such that $\eta$ agrees with $\hat{\eta}$ outside $B$ and $\sum_{x \in B} \eta_x = \sum_{x \in B} \hat{\eta}_x$, then there exists a sequence of configurations $\eta = \eta_0 , \dots , \eta_n = \hat{\eta}$, so that we have a sequence of sites in $S$: $x_0, \dots x_n$ such that $\eta_i = \eta_{i-1}^{x_{i-1},x_i}$ and jump rate $r(x_{i-1},x_i,\eta_{i-1}) > 0$.
\end{Ass}
Look for example at a product type conservative particle system, then this assumption is satisfied as a consequence of assumption \ref{Ass:irreducible}. It is easy to see that this assumption implies that $\mathcal{G}_L = \mathcal{A} = \mathcal{H}$.

\begin{Ass} \label{Ass:cirreducible2}
In the case that we are working with a spin flip system we assume that
\begin{equation*}
\inf_{x,\eta} r(x,\eta) > 0
\end{equation*}
\end{Ass}
Under this assumption we see that $\mathcal{G}_L = \mathcal{T}$. Note that we it is possible to work with a more general assumption then \ref{Ass:cirreducible2} but we do not need that here.

\begin{Cor} \label{Cor:ergodicity}
\begin{enumerate}[(a)]
\item If $L$ generates a spin flip system and $\mu \in \mathcal{I}(L)$, then $\mathcal{G}_L = \mathcal{T}$, hence $\mu$ is ergodic if and only if $\mathcal{T}$ is trivial under $\mu$.
\item If $L$ generates a conservative particle system and $\mu \in \mathcal{I}(L)$ then $\mathcal{G}_L = \mathcal{A} = \mathcal{H}$, hence $\mu$ is ergodic if and only if $\mathcal{H}$ is trivial under $\mu$.
\end{enumerate}
\end{Cor}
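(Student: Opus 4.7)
The starting point is Theorem \ref{the:ergoprime}, which reduces ergodicity of $\mu \in \mathcal{I}(L)$ to triviality of $\mathcal{G}_L$. Thus both (a) and (b) reduce to identifying $\mathcal{G}_L$ explicitly: as $\mathcal{T}$ in the spin-flip case, and as $\mathcal{A}$ in the conservative case (the equality $\mathcal{A} = \mathcal{H}$ is then Lemma \ref{Lem:AH}). I would first check that Assumption \ref{ass:wederkerigheid} is forced by the relevant irreducibility hypothesis, so that $\mathcal{G}_L$ is a $\sigma$-algebra and Theorem \ref{the:ergoprime} is applicable. In (a), flipping the same spin twice returns to $\eta$ and both flips have positive rate by Assumption \ref{Ass:cirreducible2}; in (b), the configurations $\eta$ and $\eta^{x,y}$ agree off the finite box $\{x,y\}$ with the same total mass, so Assumption \ref{Ass:cirreducible} furnishes a chain of positive-rate moves back from $\eta^{x,y}$ to $\eta$.

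For (a), I would prove both inclusions directly. Every tail event is invariant under modifying finitely many coordinates, hence a fortiori under any single-site flip, giving $\mathcal{T} \subset \mathcal{G}_L$. Conversely, Assumption \ref{Ass:cirreducible2} makes every single-site flip have positive rate, so any $A \in \mathcal{G}_L$ is invariant under flips at every site $x \in S$, and by iteration under flipping any finite set $F$ of coordinates. Taking $F = S_n$, the indicator $\mathbf{1}_A$ is $\sigma(\eta_x : x \notin S_n)$-measurable; letting $n\to\infty$ gives $A \in \mathcal{T}$.

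For (b), invariance under \emph{all} particle moves $\eta \mapsto \eta^{x,y}$ trivially implies invariance under those with positive rate, so $\mathcal{A} \subset \mathcal{G}_L$. For the reverse inclusion, fix $A \in \mathcal{G}_L$, $\eta \in A$ and a pair $x,y$ with $\eta_x \geq 1$ (the case $\eta_x=0$ is vacuous since then $\eta^{x,y} = \eta$). Applying Assumption \ref{Ass:cirreducible} to $\eta$ and $\eta^{x,y}$ produces a sequence of configurations connected by positive-rate moves; stepping along the chain and using the defining property of $\mathcal{G}_L$ at each step yields $\eta^{x,y} \in A$, whence $A \in \mathcal{A}$. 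Combining with Lemma \ref{Lem:AH} gives $\mathcal{G}_L = \mathcal{A} = \mathcal{H}$. The only piece of real content is this use of the irreducibility assumption in the backward inclusion of (b); the rest is bookkeeping behind Theorem \ref{the:ergoprime} and Lemma \ref{Lem:AH}.
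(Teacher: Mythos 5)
Your proposal is correct and follows the paper's intended route: reduce to Theorem \ref{the:ergoprime}, verify Assumption \ref{ass:wederkerigheid} from the respective irreducibility hypotheses, identify $\mathcal{G}_L$ as $\mathcal{T}$ (spin flip) resp.\ $\mathcal{A}$ (conservative, via Assumption \ref{Ass:cirreducible}), and invoke Lemma \ref{Lem:AH}. The paper leaves these identifications as "easy to see"; you have simply supplied the details, including the worthwhile explicit check that $\mathcal{G}_L$ is a $\sigma$-algebra.
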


The use of theorem \ref{the:ergoprime} is not restricted to these cases however. For example it can also be applied to the tagged particle proces \cite{BookLiggett2, ArtSaadaZRtaggedparticle, ArtSaadaLTSEP}. These models are just like the product type IPS, but now one is interested in the properties of a single particle, the tagged particle. One starts the dynamics from a translation invariant stationary product measure. Important information can be obtained by looking at the environment as seen from this tagged particle: the environment process. It is proven that the environment process also has a stationary product measure, see e.g. \cite{BookLiggett2}, proposition III.4.3, or \cite{ArtSaadaZRtaggedparticle}, proposition 7. One would like to prove that this measure is ergodic, see \cite{BookLiggett2}, proposition III.4.8. The results in this paper give with minor adaptations a shorter proof of this proposition. First of all a stationary measure $\nu$ is ergodic if and only if $\mathcal{A} \cap \mathcal{I}$ is trivial under $\nu$, where $\mathcal{I}$ is the $\sigma$-algebra of shift invariant sets. The results below in theorem \ref{The:ergodicproduct} show under which conditions $\mathcal{A}$, hence $\mathcal{A}\cap \mathcal{I}$ is trivial under $\mu$.

\subsection{Results on product measures for product type conservative particle systems} \label{sect:productmeasures}

We return to product type systems where the generator reads
\begin{equation*}
L^{b,p}f(\eta) = \sum_{x,y} p(x,y) b(\eta_x,\eta_y) \nabla_{x,y} f(\eta).
\end{equation*}
For the existence of product stationary measures we make the following two assumptions
\begin{Ass} \label{Ass:1}
For all $i,j \in W$  we have
\begin{equation*}
\frac{b(i+1,j-1)}{b(j,i)}= \frac{b(1,j-1)}{b(j,0)}\frac{b(i+1,0)}{b(1,i)}
\end{equation*}
\end{Ass}

This property ensures that we obtain a set of invariant product measures and can be traced back to Cocozza-Thivent \cite{CocozzaMisanthrope}.
The second assumption is needed for the case $W = \N$ and will be explained below.

\begin{Ass} \label{Ass:2}
If $W = \N$ we assume that
\begin{equation*}
\inf_i \frac{b(i+1,0)}{b(1,i)} = I > 0.
\end{equation*}
\end{Ass}

Under these assumptions the process generated by $L^{b,p}$ has a natural class of invariant product measures. These are defined in the following way. 

\begin{equation} \label{eqn:defa}
\begin{aligned}
a_0 & = 1 \\
a_k & = \prod_{i=0}^{k-1} \frac{b(1,i)}{b(i+1,0)}  \\
Z_\lambda & = \sum_k  a_k \lambda^k \\
\lambda^* & = \liminf_j \frac{b(j+1,0)}{b(1,j)} \\
\end{aligned}
\end{equation}
$\lambda^*$ is the radius of convergence of the formal sum $Z_\lambda$, so for $\lambda < \lambda^*$ we have that $Z_\lambda < \infty$. Note that if we are working with $W = \{0,\dots,N\}$ then we only define $a_k$ for $k \leq N$, hence $\lambda^*$ will be infinite. Because of assumption \ref{Ass:2} we know that $\lambda^* > 0$.

\bigskip

Extend $\lambda$ to have one value for each point in $S$, so $\lambda \in [0,\lambda^*)^S$. Then:

\begin{Def}
The one site marginal: let $x \in S$, then
\begin{equation*}
\mu_{\lambda_x}(n) = Z_{\lambda_x}^{-1} a_n \lambda_x^n 
\end{equation*}
The measure $\mu_\lambda$ will be the product measure on $W^S$:
\begin{equation*}
\mu_\lambda = \otimes_{x \in S} \mu_{\lambda_x}
\end{equation*}
The set of measures of this type is denoted by
\begin{equation*}
\mathcal{P}_\otimes(b) = \left\{\mu_\lambda \; : \; \lambda \in [0,\lambda^*)^S \right\}.
\end{equation*}
\end{Def}

We see that given a function $b$ we obtain the set of measures $\mathcal{P}_\otimes(b)$. Note however that different $b$'s can lead to the same set of probability measures.
We identify the stationary product measures of this type.

\begin{Prop} \label{prop:invariantmeasures}
Let $\lambda$ be a solution of $\sum_x \lambda_x p(x,y) = \lambda_y \sum_x  p(y,x)$.
Depending on the structure of $b$ we have the following:
\begin{enumerate}[(a)]
\item If for all $k$ it holds that $b(n,k) = b(n,0)$, i.e. the zero range process, then $\mu_\lambda \in \mathcal{I}(L^{b,p})$.
\item If $b(n,k) - b(k,n) = b(n,0) - b(k,0)$ and $\lambda$ is such that if $\lambda_x p(x,y) \neq \lambda_y p(y,x)$, then $\lambda_x = \lambda_y$ , then it holds that $\mu_\lambda \in \mathcal{I}(L^{b,p})$.
\item If $\lambda_x p(x,y) = \lambda_y p(y,x)$ for all $x$ and $y$ then $\mu_\lambda \in \mathcal{I}(L^{b,p})$.
\end{enumerate}
Furthermore, an invariant measure $\mu_\lambda$ in the set $\mathcal{P}_\otimes(b)$ must be of one of these three types, i.e. $\lambda$ is a solution of $\sum_x \lambda_x p(x,y) = \lambda_y \sum_x  p(y,x)$ and the pair $(\lambda, b)$ satisfies (a), (b) or (c).
\end{Prop}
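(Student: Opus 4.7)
The plan is to compute $\int L^{b,p} f \,\dd\mu_\lambda$ explicitly and to reduce the stationarity of $\mu_\lambda$ to a pointwise identity. Writing $g(n) := b(n,0)$ for brevity, the central input is Assumption \ref{Ass:1}: combined with the relation $a_k/a_{k-1} = b(1,k-1)/b(k,0)$ coming from (\ref{eqn:defa}), it yields the ``detailed balance''-type identity
\begin{equation*}
\mu_\lambda(\eta^{x,y})\, b\bigl(\eta_y+1,\eta_x-1\bigr)\, \lambda_x = \mu_\lambda(\eta)\, b(\eta_x,\eta_y)\, \lambda_y
\end{equation*}
for every $\eta$ with $\eta_x \geq 1$; the restriction $\eta_x \geq 1$ loses nothing because $b(0,\cdot)=0$ kills all other contributions. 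Using this as a change of variables in $\int b(\eta_x,\eta_y) f(\eta^{x,y}) \,\dd\mu_\lambda$, then summing over $(x,y)$ and swapping dummy labels in one of the resulting sums, gives
\begin{equation*}
\int L^{b,p} f \,\dd\mu_\lambda = \int f(\eta)\, G_\lambda(\eta)\,\dd\mu_\lambda,\qquad G_\lambda(\eta) = \sum_{x,y} \gamma(x,y)\,b(\eta_x,\eta_y),
\end{equation*}
where $\gamma(x,y) := p(y,x)\lambda_y/\lambda_x - p(x,y)$ and the $(x,y)$-sum is locally finite by Assumption \ref{Ass:Cp}. Letting $f$ range over local functions in $D$ shows that stationarity is equivalent to $G_\lambda(\eta) = 0$ for every $\eta$.

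For the forward direction I verify $G_\lambda \equiv 0$ under each hypothesis. The condition $\sum_x \lambda_x p(x,y) = \lambda_y \sum_x p(y,x)$ is equivalent to $\sum_y \gamma(x,y) = 0$ for each $x$. In case (a), $b(\eta_x,\eta_y) = g(\eta_x)$ depends only on $x$, so $G_\lambda(\eta) = \sum_x g(\eta_x)\sum_y \gamma(x,y) = 0$. Case (c) is immediate because each $\gamma(x,y) = 0$ individually. For case (b) I subtract the identically zero quantity $\sum_{x,y} \gamma(x,y)\,g(\eta_x)$ to rewrite
\begin{equation*}
G_\lambda(\eta) = \sum_{x,y} \gamma(x,y)\bigl[b(\eta_x,\eta_y) - g(\eta_x)\bigr].
\end{equation*}
The functional equation of case (b) makes the bracket symmetric in $x,y$; pairs with $\lambda_x p(x,y) = \lambda_y p(y,x)$ have $\gamma(x,y) = 0$, and the remaining pairs satisfy $\lambda_x = \lambda_y$, so $\gamma(x,y) + \gamma(y,x) = 0$ and the $(x,y)$/$(y,x)$ contributions cancel.

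For the converse, assume $G_\lambda \equiv 0$. Evaluating on the one-site configuration $\eta = n\delta_{x_0}$ with $n \geq 1$ kills every summand with $x \neq x_0$ (because $b(0,\cdot)=0$) and leaves $g(n)\sum_y \gamma(x_0,y) = 0$; since $g(n) > 0$ by Assumption \ref{Ass:irreducible}, this recovers the required equation $\sum_x \lambda_x p(x,y) = \lambda_y \sum_x p(y,x)$. Next, for any pair $(x_0,y_0)$ with $\gamma(x_0,y_0) \neq 0$, testing on configurations supported in $\{x_0,y_0\}$ with values $(n,k) \in \{1,2,\ldots\}^2$, together with the identity $\gamma(y_0,x_0) = -(\lambda_{x_0}/\lambda_{y_0})\,\gamma(x_0,y_0)$, forces
\begin{equation*}
b(n,k) - g(n) = \frac{\lambda_{x_0}}{\lambda_{y_0}}\bigl[b(k,n)-g(k)\bigr].
\end{equation*}
Swapping $n \leftrightarrow k$ and substituting gives $(\lambda_{x_0}/\lambda_{y_0})^2 = 1$ unless $b(n,k) = g(n)$ identically, which is case (a); otherwise $\lambda_{x_0} = \lambda_{y_0}$ and $b$ satisfies the functional equation of case (b). If no such pair $(x_0,y_0)$ exists at all then $\lambda$ is reversible and we are in case (c).

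The main obstacle will be the converse: justifying pointwise extraction of $G_\lambda = 0$ from $\int Lf \,\dd\mu_\lambda = 0$ for every $f \in D$, and executing the case analysis without gaps. Additional care is needed at the upper boundary when $W = \{0,\ldots,N\}$ (using $b(\cdot,N) = 0$) and in handling sites where $\lambda_x = 0$, for which $\mu_\lambda$ degenerates at $x$ and the effective site set must be reduced before applying the above arguments.
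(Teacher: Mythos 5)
Your forward direction is essentially the paper's argument: the same Radon--Nikodym change of variables (the paper's Lemma \ref{lem:RNderivative}), the same reduction to the kernel $\sum_{x,y}\gamma(x,y)b(\eta_x,\eta_y)$ with $\gamma(x,y)=p(y,x)\lambda_y/\lambda_x-p(x,y)$, and the same three verifications. Your treatment of case (b) --- subtracting the vanishing quantity $\sum_{x,y}\gamma(x,y)b(\eta_x,0)$ and cancelling the $(x,y)$ and $(y,x)$ contributions of the symmetric bracket --- is a slightly cleaner packaging of the paper's symmetrization, which instead routes through a separate lemma ($\sum_{y\nsim x}p(y,x)=\sum_{y\nsim x}p(x,y)$, Lemma \ref{lem:revp}); both rest on the same cancellation. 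The algebra at the end of your converse (deriving $b(n,k)-b(n,0)=\frac{\lambda_{x_0}}{\lambda_{y_0}}[b(k,n)-b(k,0)]$ and swapping $n\leftrightarrow k$ to force either case (a) or $\lambda_{x_0}=\lambda_{y_0}$) is also exactly the paper's.

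The gap is the step you yourself flag as ``the main obstacle'': passing from $\int L^{b,p}f\,\dd\mu_\lambda=0$ for all $f\in D$ to the pointwise statement $G_\lambda(\eta)=0$ at the configurations $n\delta_{x_0}$ and $n\delta_{x_0}+k\delta_{y_0}$. Under a product measure on $\N^S$ these individual configurations have measure zero, and $G_\lambda$ is not a well-defined function on all of $E$ (the double sum over all pairs need not converge absolutely), so ``evaluating $G_\lambda$ on a one-site configuration'' is not licensed by an a.e.\ identity. If you test with the natural cylinder indicator $f=\1\{\eta_x=0 \text{ for } x\in B\setminus\{z\},\ \eta_z=n\}$, the resulting identity still contains the contribution of all pairs $(x,y)$ lying entirely outside $B$, over which you have no control on that cylinder. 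The paper's resolution is precisely the missing ingredient: it tests with the pair of indicators $\1_{F(z)}$ and $\1_{F^*(z)}$ (and $\1_{F(z,w)}$, $\1_{F^*(z,w)}$ for two sites) and uses the product structure of $\mu_\lambda$ to show that $\int L\1_{F^*(z)}\,\dd\mu_\lambda=0$ exactly cancels the uncontrolled exterior contribution in $\int L\1_{F(z)}\,\dd\mu_\lambda$, isolating the finite local sum $\sum_{y\in B(z)}b(n,0)\left[p(y,z)\lambda_y/\lambda_z-p(z,y)\right]$ and its two-site analogue. Without this subtraction, or an equivalent device, the converse does not go through. A smaller point you correctly anticipate but must still handle: if some $\lambda_x=0$ the marginal at $x$ is degenerate and the ratios $\lambda_y/\lambda_x$ in $\gamma$ are ill-defined, so such sites have to be excluded before the argument applies.
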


\textit{remark 1} Note that the condition $b(n,k) - b(k,n) = b(n,0) - b(k,0)$ is equivalent to $b(n,k) = r(n) + s(n,k)$ where $s$ is symmetric. Choose for example $r(n) = b(n,0)$. 

\textit{2} Furthermore it is an interesting question whether these results can be extended to infinite range $p$. 

\bigskip

Now that we know what the class of invariant product measures is, we can apply corollary \ref{Cor:ergodicity}. A coupling argument is used to prove the following theorem. For a fixed generator $L^{b,p}$ pick $\mu_\lambda \in \mathcal{I}(L^{b,p}) \cap \mathcal{P}_\otimes(b)$.

\begin{The} \label{The:ergodicproduct} 
\begin{enumerate}[(a)]
\item Suppose $W = \{0,\dots,N \}$ then $\mu_\lambda$ is ergodic if and only if $  \sum_{i : \lambda_i < 1} \lambda_i + \sum_{i: \lambda_i \geq 1} \frac{1}{\lambda_i} = \infty$. 

\item If $W = \N$ and $(\ast \ast)$: $\lambda^*<\infty$ or  $\lambda^* = \infty$ and there is a finite set $D$, such that $\gcd(D) = 1$ and
\begin{equation*}
D \subset \left\{d \geq 1 \; : \; \sup_k \frac{a_k^2}{a_{k-d}a_{k+d}} = \sup_k \prod_{i=0}^{d-1}\frac{b(k+i+1,k-i-1)}{b(k-i,k+i)} < \infty \right\}
\end{equation*}
then it holds that $\mu_\lambda$ is ergodic if and only if  $\sum_i \lambda_i = \infty$. 

\bigskip

\item Furthermore if $W = \N$ and $ \sum_{i : \lambda_i < 1} \lambda_i +  \sum_{i: \lambda_i \geq 1} \frac{1}{\lambda_i} = \infty$ then $\mu_\lambda$ is ergodic.
\end{enumerate}
\end{The}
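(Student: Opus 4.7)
The plan is to apply Corollary~\ref{Cor:ergodicity}(b) together with Lemma~\ref{Lem:AH} to reduce ergodicity of $\mu_\lambda$ to triviality of $\mathcal{H}$, and then analyze the tail $\sigma$-algebra of the partial sums $T_n = \sum_{\phi(x) \leq n} \eta_x$ under $\mu_\lambda$. Since the $\eta_x$ are independent under $\mu_\lambda$, this becomes a question about independent sums. The necessity parts of (a) and (b) come from Borel--Cantelli and Kolmogorov's one-series theorem, producing an explicit nontrivial $\mathcal{H}$-measurable random variable; the sufficiency directions in (a), (b), and (c) follow from a coupling argument extending Jung~\cite{Jung}.

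For the necessity direction in (a), a short calculation shows that under $\mu_{\lambda_x}$ on $\{0,\dots,N\}$ the variance $\mathrm{Var}_{\mu_{\lambda_x}}(\eta_x)$ is comparable to $\min(\lambda_x, 1/\lambda_x)$ in the limits $\lambda_x \to 0$ and $\lambda_x \to \infty$, and bounded below by a positive constant on any compact subset of $(0,\infty)$; consequently, the sum in (a) is finite if and only if $\sum_i \mathrm{Var}(\eta_i) < \infty$. When this occurs, Kolmogorov's one-series theorem gives $T_n - \E[T_n] \to R$ almost surely to a non-constant random variable $R$, and since $R$ is $\mathcal{H}_m$-measurable for every $m$ it is $\mathcal{H}$-measurable, so $\mathcal{H}$ is not trivial. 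For (b), $\sum_i \lambda_i < \infty$ implies $\sum_i \PR(\eta_i \geq 1) < \infty$, so Borel--Cantelli yields $T_\infty < \infty$ almost surely; $T_\infty$ is then a nontrivial $\mathcal{H}$-measurable random variable.

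For sufficiency I would show that every bounded $\mathcal{A}$-measurable $f$ is $\mu_\lambda$-almost surely constant. Since $f \in \mathcal{H}$, the conditional expectation $f_n = \E_{\mu_\lambda}[f \mid \mathcal{B}_n]$ depends on $\eta_1,\dots,\eta_n$ only through $T_n$, so $f_n = \tilde h_n(T_n)$ for some $\tilde h_n$, and $f_n \to f$ in $L^2(\mu_\lambda)$ and almost surely. The coupling step constructs, for each $n$ and each pair $(\eta, \eta')$ with $T_n(\eta) - T_n(\eta')$ a small integer, an exchange of particles inside $\{\phi \leq n\}$ producing $\hat\eta$ with $\tilde h_n(T_n(\hat\eta)) = \tilde h_n(T_n(\eta'))$ while preserving the marginal distribution up to small total variation error; by Assumption~\ref{Ass:cirreducible} this is legal, and combined with the $\mathcal{A}$-invariance of $f$ it forces the conditional distribution of $\tilde h_n(T_n)$ to be increasingly concentrated at a single value. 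Letting $n \to \infty$ and using $f_n \to f$ then forces $\mathrm{Var}_{\mu_\lambda}(f) = 0$.

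The main obstacle is controlling the total variation cost of the exchange: the coupling needs the law of $T_n$ to have a local regularity property ensuring that $\mu_\lambda(T_n = k) \approx \mu_\lambda(T_n = k+1)$ in a suitable sense. In the bounded case (a), divergence of $\sum_i \mathrm{Var}(\eta_i)$ supplies the standard local central limit theorem for bounded independent summands, providing this regularity. In case (b) on $W=\N$, boundedness is lost, and condition $(\ast\ast)$ plays the role of replacement: the uniform bound $\sup_k a_k^2/(a_{k-d}a_{k+d})<\infty$ for $d$ in a set $D$ with $\gcd(D)=1$ encodes the lattice-arithmetic regularity needed for a local limit theorem to hold for $T_n$. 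Case (c) reduces to (a): the hypothesis $\sum_{\lambda_i<1}\lambda_i + \sum_{\lambda_i\geq 1} 1/\lambda_i = \infty$ still implies $\sum_i \mathrm{Var}(\eta_i) = \infty$ through sites where $\lambda_i$ is bounded away from $0$ or from $\infty$, and the summands from those sites remain bounded in distribution, allowing the local CLT of case (a) to be applied without appealing to $(\ast\ast)$.
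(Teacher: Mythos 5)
Your overall architecture matches the paper's (reduce to triviality of $\mathcal{A}=\mathcal{H}$ via Corollary \ref{Cor:ergodicity} and Lemma \ref{Lem:AH}; Borel--Cantelli/Kolmogorov for necessity; a coupling for sufficiency), and your necessity arguments are essentially sound. But the sufficiency direction, which is where all the work lies, has a genuine gap. Once you have written $f_n=\E[f\mid\mathcal{B}_n]=\tilde h_n(T_n)$, the particle dynamics and Assumption \ref{Ass:cirreducible} have done their job; what remains is a pure statement about the tail of the partial sums $T_n$ of independent integer-valued variables. Your proposed "exchange of particles inside $\{\phi\leq n\}$'' cannot bridge two configurations with different values of $T_n$, because any move inside $S_n$ conserves $T_n$ -- so the step that is supposed to force $\tilde h_n(T_n)$ to concentrate is incoherent as stated. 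The correct remaining step (the paper's route) is to view $(T_n)$ as a time-inhomogeneous Markov chain, invoke Iosifescu's criterion that $\mathcal{H}$ is trivial iff chains started from different initial values merge in total variation, and then build a Mineka coupling whose success is equivalent to the overlap condition $\sum_i\sum_k\PR[\eta_i=k]\wedge\PR[\eta_i=k+d]=\infty$ for $d$ ranging over a set of $\gcd$ one (Bezout). Your "local CLT'' substitute is not free: for non-identically distributed lattice summands, divergence of $\sum_i\mathrm{Var}(\eta_i)$ does \emph{not} by itself yield a span-one local limit theorem; the aperiodicity input one needs is precisely the overlap condition above, and verifying it from the explicit form of $\mu_{\lambda_i}$ under each hypothesis on $\lambda$ is the bulk of the paper's proof (Theorems \ref{The:equivA}, \ref{The:equivA2}, \ref{The:trivialitydemand}), which your proposal omits entirely.

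Your reduction of case (c) to case (a) is also wrong. The divergence $\sum_{i:\lambda_i\geq1}\frac{1}{\lambda_i}=\infty$ can occur entirely through sites with $\lambda_i\to\infty$ (e.g.\ $\lambda_i=i$), where the single-site laws $\mu_{\lambda_i}$ are not tight, so there is no compactly supported family of summands to which a bounded-summand local CLT could apply. The paper instead uses Assumption \ref{Ass:2} to get $\PR[\eta_i=k]\geq\frac{I}{\lambda_i}\PR[\eta_i=k+1]$, hence $\sum_k\PR[\eta_i=k]\wedge\PR[\eta_i=k+1]\geq\frac{I}{\lambda_i}\PR[\eta_i>0]\geq\frac{I}{1+\lambda_i}$ up to constants, so that $\sum_i\frac{1}{\lambda_i}=\infty$ (or $\sum_i\lambda_i=\infty$ over small $\lambda_i$) directly gives the Mineka condition without any appeal to $(\ast\ast)$. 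Relatedly, in case (b) the role of $(\ast\ast)$ is not a "lattice-arithmetic'' regularity of $T_n$ but a bound on the spikiness of a \emph{single} marginal near its mode ($\sup_k a_k^2/(a_{k-d}a_{k+d})<\infty$ controls $\mu_x(k)/\mu_x(k+d)$ for $k$ in the set $M^*(x)$ of local modes), which is what keeps $\sum_k\mu_{\lambda_i}(k)\wedge\mu_{\lambda_i}(k+d)$ bounded away from zero when $\lambda_i\to\infty$; the counterexample of Section \ref{sec:deterministicprofile} shows this single-site overlap really can fail even though $\sum_i\lambda_i=\infty$.
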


Note that case (a) was proved also by Jung \cite{Jung} for $W = \{0,1\}$. His condition $\sum_x \frac{\lambda_x}{(1+\lambda_x)^2} = \infty$ seems different but is equivalent to the one given here.

\textit{Remarks}

1. In the case that $W= \N$ one might think that it is possible to prove that $\sum_i \lambda_i = \infty$ implies that $\mu_\lambda$ is ergodic without any further conditions like $(\ast \ast)$. We show that this is not possible in section \ref{sec:deterministicprofile}. We give an example of a system where $b$ and $p$ have a specific structure such that there exists a product measures of the given type such that $\sum_i \lambda_i = \infty$, while $\mu_\lambda$ is not ergodic.

This raises the question under which additional assumptions $\sum_i \lambda_i = \infty$ implies ergodicity. The proof of (b) shows some analogy with the proof of theorem 1.8 in Aldous and Pitman \cite{ArtAldousPitman} and the open question we see here is similar to the open question in \cite{ArtAldousPitman}, see theorem 1.8 and example 7.5 in that article.

\bigskip

2. We give an explanation for the symmetric nature of theorem \ref{The:ergodicproduct} (a). We will see that the condition for ergodicity means that the measure concentrates on configurations which have an infinite number of particles i.e. $\sum \eta_i = \infty$, but also such that $\sum_i (N-\eta_i) = \infty$, i.e. infinitely many anti-particles. We give a more intuitive view on this by the following approach. Instead of saying that a particle moves from site $x$ to site $y$ with rate $p(x,y)b(\eta_x,\eta_y)$ one could say that an empty spot, or anti-particle moves from site $y$ to site $x$ with rate $\tilde{p}(y,x) \tilde{b}(N-\eta_y,N-\eta_x)$ where 
\begin{align*}
\tilde{b}(n,k) & = b(N-k,N-n) \\
\tilde{p}(x,y) & = p(y,x) \\
\end{align*}
For more details on this rewrite see section \ref{Sect:antiparticle} below.

\section{Proof of theorem \ref{the:ergoprime} and lemma \ref{Lem:AH}} \label{sec:proofergo}

We start with the proof of lemma \ref{Lem:AH} which states that $\mathcal{A} = \mathcal{H}$. We refer to the point $\phi^{-1}(0)$ as the origin.

\begin{proof}[Proof of lemma \ref{Lem:AH}]
Let $A \in \mathcal{A}$ and fix $n$, we show that $A \in \mathcal{H}_n$. By the defining property of $\mathcal{A}$ we see that $A$ does not depend on the exact configuration of $\eta$ in $S_n$ given its configuration on $S_n^c$ but just on the sum of the values in $S_n$. 
We elaborate on this argument a little for the case that $W = \N$. If one understands the argument for this case then it is clear for the finite case too. Suppose that we have a configuration $\eta \in A$. We see that the configuration $\eta(n)_i =  (\sum_{j \in S_n} \eta_j) \delta_{\phi^{-1}(0)}(i) + \sum_{j \notin S_n} \eta_i \delta_j(i) $ is in $A$ too, because $A \in \mathcal{A}$. So any configuration that is equal to $\eta$ outside $S_n$ and has $\sum_{i \in S_n} \eta_i$ of particles in $S_n$ is in $A$. This means that given the configuration outside $S_n$, $\1_A$ only depends on this $\sum_{i \in S_n} \eta_i$.
Hence $A \in \mathcal{H}_n$, but $n$ was arbitrary, so $A \in \mathcal{H}$.

\bigskip

Let $A \in \mathcal{H}$. Pick a $\eta \in A$ we show that for $x$ and $y$ so that $\eta_x >0$ that $\eta^{x,y} \in A$. Pick a $n$ so that $n > \phi(x),\phi(y)$. We know that $A \in \mathcal{H}_n$ so $A$ does not depend on the exact values in $S_n$ but only on the sum $\sum_{i \in S_n} \eta_i$ which is not changed by moving a particle from $x$ to $y$, therefore $\eta^{x,y} \in A$. This yields $A \in \mathcal{A}$.
\end{proof}

We start with proving theorem \ref{the:ergoprime}, but for this we need some machinery. Fix a measure $\mu \in \mathcal{I}(L)$.

\begin{Prop} \label{prop:extensionL2}
The semigroup $S_t$ on $\overline{D}$ extends to a semigroup $S_t^\mu$ on $\mathcal{L}_2(\mu)$. This in turn defines a unbounded operator $L^\mu$, with domain $\mathcal{D}(L^\mu)$ which is the closure $L$ in $\mathcal{L}_2(\mu)$. $D$ is also a core for $L^\mu$.
\end{Prop}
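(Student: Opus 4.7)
My plan is in three stages: first extend $S_t$ to a contraction on $\mathcal{L}_2(\mu)$, then upgrade the extension to a strongly continuous semigroup, and finally identify the generator $L^\mu$ together with $D$ as a core.

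For the contractivity, I would use stationarity of $\mu$ together with Jensen's inequality. Since $S_t$ is the semigroup of a Markov process we have $S_t 1 = 1$ and $S_t$ is positivity preserving, so for bounded $f \in \overline{D}$ the pointwise Jensen inequality gives $(S_t f)^2 \leq S_t(f^2)$. Note that $f^2 \in \overline{D}$ whenever $f \in \overline{D}$: if $f \in D$ then $\Delta_{f^2}(i) \leq 2 \vn{f}_\infty \Delta_f(i)$, so $\tn{f^2} \leq 2\vn{f}_\infty \tn{f} < \infty$, and the general case follows by sup-norm approximation. Integrating and using $\mu \in \mathcal{I}(L)$, which gives $\int S_t g \, \dd \mu = \int g \, \dd \mu$ for all $g \in \overline{D}$, I obtain
\begin{equation*}
\int (S_t f)^2 \, \dd \mu \;\leq\; \int S_t(f^2) \, \dd \mu \;=\; \int f^2 \, \dd \mu.
\end{equation*}
Thus $S_t$ restricted to $\overline{D}$ is an $\mathcal{L}_2(\mu)$-contraction. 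Bounded local functions lie in $D \subset \overline{D}$ and are $\mathcal{L}_2(\mu)$-dense, so $S_t$ extends uniquely to a contraction $S_t^\mu$ on $\mathcal{L}_2(\mu)$, and the semigroup property survives the extension.

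For strong continuity at $t=0$, I observe that for $f \in D$ we have $S_t f \to f$ in sup norm (since $D$ lies in the domain of $L$, which generates $S_t$ on $\overline{D}$), hence in $\mathcal{L}_2(\mu)$. Combining this with density of $D$ in $\mathcal{L}_2(\mu)$ and the uniform contraction bound, strong continuity on all of $\mathcal{L}_2(\mu)$ follows by a standard $\varepsilon/3$ argument. Hille--Yosida then yields a densely defined closed generator $L^\mu$ on a domain $\mathcal{D}(L^\mu)$.

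Finally I verify that $D$ is a core. The inclusion $D \subset \mathcal{D}(L^\mu)$ with $L^\mu f = Lf$ is immediate: for $f \in D$, $t^{-1}(S_t f - f) \to Lf$ in sup norm, hence in $\mathcal{L}_2(\mu)$. To show that $D$ is actually a core for $L^\mu$, by a standard Hille--Yosida range criterion it suffices to show that $(\lambda - L) D$ is $\mathcal{L}_2(\mu)$-dense for some $\lambda > 0$; this then forces $L^\mu$ to coincide with the $\mathcal{L}_2(\mu)$-closure of $L|_D$, which is the assertion of the proposition. Since $D$ is already a core for $L$ on $\overline{D}$ with respect to $\vn{\cdot}_\infty$, $(\lambda - L)D$ is sup-norm dense in $\overline{D}$, and combining this with $\mathcal{L}_2(\mu)$-density of $\overline{D}$ and the trivial bound $\vn{h}_{\mathcal{L}_2(\mu)} \leq \vn{h}_\infty$ for bounded $h$ gives the required density. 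The main subtlety I anticipate is in the case $W = \mathbb{N}$, where $\overline{D} \subsetneq C(E)$: one has to verify that stationarity of $\mu$ genuinely extends to all $g \in \overline{D}$ and that squaring stays inside $\overline{D}$, but both follow from sup-norm continuity together with the assumed boundedness of $b$.
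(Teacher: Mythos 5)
Your proposal is correct and follows essentially the same route as the paper: contraction on $\overline{D}$ via stationarity (the paper leaves the Jensen step implicit where you spell it out), density of $D$ in $\mathcal{L}_2(\mu)$, continuous extension to $S_t^\mu$, and the core property via density of $\mathcal{R}(\lambda-L)$ combined with $\vn{\cdot}_\mu\leq\vn{\cdot}_\infty$ (the Ethier--Kurtz range criterion). The only place you are terser than the paper is the $\mathcal{L}_2(\mu)$-density of bounded local functions, which the paper justifies by martingale convergence of $\E[f\mid\B_n]$ plus truncation; that is standard and your assertion is fine.
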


We denote the norm on $\mathcal{L}_2(\mu)$ by $\vn{\cdot}_\mu$. The proof is rather standard but we give it for sake of completeness in our general setting.

\begin{proof}
By invariance of $\mu$ we obtain that
\begin{equation*}
\vn{S_t f}^2_\mu  \leq  \vn{f}^2_\mu.
\end{equation*}
Hence we see that $S_t$ viewed as a operator on the subset $\overline{D} \subset \mathcal{L}_2(\mu)$ is a contraction. We now prove that $\overline{D}$ is dense in $\mathcal{L}_2(\mu)$. Clearly $D$ contains all local bounded functions, hence its closure in $\mathcal{L}_2(\mu)$ contains all local functions in $\mathcal{L}_2(\mu)$. We prove that all local bounded functions in $\mathcal{L}_2(\mu)$ are dense in $\mathcal{L}_2(\mu)$.

\bigskip

Recall the definitions of $\mathcal{B}_n$. Pick a bounded $f \in \mathcal{L}_2(\mu)$ and define the local functions $f_n = \E[f \; | \; \mathcal \B_n]$. As taking a conditional expectation is a projection in a $\mathcal{L}_2$ space we see that $\vn{f_n}_\mu \leq \vn{f}_\mu$, furthermore the sequence $f_n$ is a martingale with respect to the filtration $(\B_n)_{n \geq 0}$. By martingale convergence $f_n$ converges to $f$ in $\mathcal{L}_2(\mu)$. 

By a truncation argument we see that the bounded functions are dense in $\mathcal{L}_2(\mu)$, so indeed $D$ is dense in $\mathcal{L}_2(\mu)$.

\bigskip

So $S_t$ being a contraction with respect to the Hilbert space norm on $\overline{D} \subset \mathcal{L}_2(\mu)$ defines by a continuous extension a linear operator $S_t^\mu$ on $\mathcal{L}_2(\mu)$. This also defines a generator $L^\mu$ with domain 
\begin{equation*}
\mathcal{D}(L^\mu) := \left\{f \in \mathcal{L}_2(\mu) \; : \; \lim_{t\downarrow 0} \frac{S^\mu_t f - f}{t} \text{ exists in } \mathcal{L}_2(\mu) \right\}.
\end{equation*}
As we clearly have that $\vn{\cdot}_\mu \leq \vn{\cdot}_\infty$, it holds that $L^\mu$ is the closure of $L$ and $D \subset \mathcal{D}(L^\mu)$. As $D$ is a core for $L$ we obtain that $D$ is a core for $L^\mu$ as well. 

This last statement is obtained by using proposition 3.1 from Ethier and Kurtz \cite{EthierKurtz}. This proposition shows that $\mathcal{R}(\lambda - L)$ is dense in $\overline{D}$ for some $\lambda >0$. We know that $D$ is dense in $\mathcal{L}_2(\mu)$, hence $\mathcal{R}(\lambda - L^\mu)$ is dense in $\mathcal{L}_2(\mu)$. The same proposition yields that $D$ is a core for $L^\mu$.

\end{proof}

We now give a technical result which helps us to analyse the structure of the set $\mathcal{I}$.
Define in the spirit of lemma IV.4.3 of Liggett \cite{BookLiggett1} and Sethuraman \cite{SSEx} the following two quadratic forms, for $f$ for which they are finite:
\begin{align*}
Q(f) & = - \E_\mu[f L^\mu f] \\
R(f) & = \frac{1}{2} \sum_T \E_\mu \left[ \int c_T(\eta,\dd \zeta) (\nabla_{T,\zeta}f(\eta))^2\right]
\end{align*}

Liggett defines bilinear forms instead of quadratic ones, we will not do that here because the following result is only true for quadratic forms. Below we will show that equality for bilinear forms is possible only in the case that the underlying measure $\mu$ is reversible with respect to the dynamics. 

\begin{Prop} \label{prop:Dirichletform}
For $f \in \mathcal{D}(L^\mu)$:
\begin{equation*}
Q(f) = R(f) < \infty
\end{equation*}
\end{Prop}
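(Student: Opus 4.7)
The plan is to derive a carr\'e-du-champ style identity on $D$ and then extend by density, exploiting that $D$ is a core for $L^\mu$.

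First I would work on the core $D$. For $f \in D$ one first checks that $f^2 \in D$, which follows from the estimate $\Delta_{f^2}(i) \le 2\vn{f}_\infty \Delta_f(i)$, and that $Lf$ is bounded with $\vn{Lf}_\infty \le C \tn{f}$ by Assumption \ref{Ass:C}. Therefore both $L(f^2)$ and $fLf$ lie in $C_b(E) \subset \mathcal{L}_2(\mu)$ and the following pointwise identity, obtained by plugging $a^2 - b^2 = (a-b)^2 + 2b(a-b)$ inside the sum defining $L$, makes sense term by term:
\begin{equation*}
L(f^2)(\eta) - 2 f(\eta) Lf(\eta) = \sum_T \int c_T(\eta, \dd\zeta) (\nabla_{T,\zeta} f(\eta))^2.
\end{equation*}
Integrating against $\mu$ and using $\int L(f^2) \dd \mu = 0$ (which is allowed because $f^2 \in D$ and $\mu \in \mathcal{I}(L)$, see (\ref{eqn:I})) would then yield $Q(f) = R(f) < \infty$ on $D$.

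Next I would extend to $f \in \mathcal{D}(L^\mu)$. By Proposition \ref{prop:extensionL2} pick $f_n \in D$ with $f_n \to f$ and $Lf_n \to L^\mu f$ in $\mathcal{L}_2(\mu)$. Applying the identity from the previous step to $f_n - f_m \in D$ and using Cauchy--Schwarz,
\begin{equation*}
R(f_n - f_m) = Q(f_n - f_m) \le \vn{f_n - f_m}_\mu \vn{L f_n - L f_m}_\mu \longrightarrow 0.
\end{equation*}
Hence $(\nabla_{T,\zeta} f_n)_n$ is Cauchy in the natural $\mathcal{L}_2$ space built from the measure $c_T(\eta,\dd\zeta)\dd\mu(\eta)$; I would pass to an almost-everywhere subsequence to identify the limit with $\nabla_{T,\zeta} f$, so that $R(f_n) \to R(f) < \infty$. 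Combining this with $Q(f_n) \to Q(f)$ (immediate from the inner product) and the equality $Q(f_n) = R(f_n)$ on $D$ gives $Q(f) = R(f)$.

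The hard part will be this extension step: one must justify that the limit of $R(f_n)$ really is $R(f)$ for the given $f \in \mathcal{D}(L^\mu)$ rather than a value depending on the approximating sequence, i.e.\ closability of the pre-Dirichlet form $R$. The carr\'e-du-champ identity handles this more or less automatically because it couples graph-norm Cauchyness (seen through $Q$) to Cauchyness of the sum of squares (seen through $R$). Note that this works only because $R$ and $Q$ are quadratic forms; if one kept their bilinear counterparts, then the symmetric $R(f,g)$ and the (generally non-symmetric) $-\E_\mu[f L g]$ would no longer agree outside the reversible setting, as the excerpt already warns. Thus the quadratic nature is essential: symmetrising $\E_\mu[f Lf]$ only exposes the symmetric part of $L^\mu$, which is precisely what matches the symmetric form $R$.
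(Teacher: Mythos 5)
Your argument is correct and is essentially the paper's proof: the paper likewise reduces everything on the core $D$ to the identity $\E_\mu[fL^\mu f] = \E_\mu[fL^\mu f] - \tfrac12\E_\mu[L^\mu f^2]$, and your carr\'e-du-champ computation $L(f^2)-2fLf=\sum_T\int c_T(\eta,\dd\zeta)(\nabla_{T,\zeta}f)^2$ is exactly the ``working out of the right-hand side'' the paper alludes to, followed by the same core-based extension to $\mathcal{D}(L^\mu)$ borrowed from Liggett's lemma IV.4.3. The one step to be careful with is the identification of the $\mathcal{L}_2$-limit of $\nabla_{T,\zeta}f_n$ with $\nabla_{T,\zeta}f$: your almost-everywhere subsequence argument tacitly needs the image of $c_T(\eta,\dd\zeta)\mu(\dd\eta)$ under $(\eta,\zeta)\mapsto\theta_{T,\zeta}(\eta)$ to be absolutely continuous with respect to $\mu$, which is where the stationarity of $\mu$ (and the detailed argument in Liggett that the paper defers to) actually enters.
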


\textit{Remark} This proposition is an improvement over lemma 2.4 of Sethuraman \cite{SSEx} since the latter only holds for product measures.

\begin{proof}
The proof is analogous to that of lemma IV.4.3 in Liggett \cite{BookLiggett1}. We will not repeat the proof here, the key step that is different is to note that for $f \in D$ it holds that
\begin{equation*}
\E_\mu[f L^\mu f] = \E_\mu[f L^\mu f] - \frac{1}{2} \E_\mu[L^\mu f^2].
\end{equation*}
After that simply work out the right hand side and plug in the arguments from \cite{BookLiggett1}.
\end{proof}

The same techniques can be used to prove that for $f,g \in \mathcal{D}(L^\mu)$
\begin{equation*}
-\E_\mu[fL^\mu g + gL^\mu f] = \sum_T \E_\mu \left[ \int c_T(\eta,\dd \zeta) (\nabla_{T,\zeta}f(\eta))(\nabla_{T,\zeta}g(\eta))\right]
\end{equation*}
by using that $\E_\mu[fL^\mu g + gL^\mu f] = \E_\mu[fL^\mu g + gL^\mu f - L^\mu(fg)]$. This shows that we have equality for bilinear forms only when $\mu$ is reversible with respect to the dynamics.

\bigskip

For the proof of theorem \ref{the:ergoprime} we introduce approximating Markov processes. Recall the definition of $S_n$.
Define for $f \in D$
\begin{equation*}
L^{(n)}f(\eta) = \sum_{T \subset S_n} \int c_T(\eta,\dd \zeta) (f(\theta_{T,\zeta}(\eta)) - f(\eta)).
\end{equation*}
Because $S_n$ is a finite set $L^{(n)}$ is a bounded operator which therefore generates a Markov Jump process with semigroup $S_t(n)$. This semigroup also extends to $S_t^\mu(n)$ on $\mathcal{L}_2(\mu)$.

\begin{proof}[Proof of theorem \ref{the:ergoprime}]
Suppose that $\mu$ is ergodic. Pick a set $A \in \mathcal{G}_L$, we need to show that $\mu(A) \in \{0,1\}$ or equivalently that the function $\1_A$ is constant $\mu$ almost surely. Intuitively one would like to say that $L^\mu \1_A = 0$, because clearly for every $\eta$, finite $T \subset S$ and $\zeta \in W^T$  it holds that $\nabla_{T,\zeta} \1_A(\eta) = 0$, hence $S^\mu_t \1_A = \1_A$ for all $t$, hence by ergodicity $\1_A$ is constant $\mu$ almost surely.

\bigskip

This reasoning is not rigorous as we do not know whether $\1_A \in \mathcal{D}(L^\mu)$. However by corollary I.3.14 in Liggett \cite{BookLiggett1} we obtain that $S_t^\mu(n) f \rightarrow S_t^\mu f$ for all $f \in \mathcal{L}_2(\mu)$, uniformly for $t$ in compact intervals.

\bigskip

The set $A \in \mathcal{G}_L$ is invariant under finitely many transformations of the form $\eta$ to $\theta_{T,\zeta}(\eta)$ for $T$ and $\zeta$ so that $c_T(\eta,\zeta) > 0$. Denote the Markov process generated by $L^{(n)}$ by $\eta_n(t)$. Under the law of this Markov process the set on which there are only a finite number of allowed transitions by time $t$ has probability $1$. This means that for any starting configuration $\eta$, $t \geq 0$ and $n \in \N$ it holds that $S_t^\mu(n) \1_A(\eta) = \E_\eta[1_A(\eta_n(t))] = 1_A(\eta)$. Hence for every $t$ and $n$ it holds that $S_t^\mu(n) \1_A = \1_A$ in the space $\mathcal{L}_2(\mu)$. Furthermore $S_t^\mu(n) \1_A \rightarrow S_t^\mu \1_A$, hence $S_t^\mu \1_A = \1_A$ in $\mathcal{L}_2(\mu)$ for every $t$.

\bigskip

Now we can use the ergodicity of $\mu$ with respect to the Markov process to obtain that $1_A$ is $\mu$ constant almost surely, which implies that $\mu(A) \in \{0,1\}$. Since $A \in \mathcal{G}_L$ was arbitrary we see that $\mathcal{G}_L$ is trivial under $\mu$.

\bigskip

For the second implication assume that $\mathcal{G}_L$ is trivial under $\mu$. Fix $A \in \mathcal{B}$ and assume that $S_t^\mu \1_A = \1_A \; \mu $ a.s. for all $t\geq 0$. We will show that there is a set $A_\infty \in \mathcal{G}_L$ such that $\mu(A) = \mu(A_\infty)$. First note that $\1_A \in \mathcal{D}(L^\mu)$ and $L^\mu \1_A = 0$, hence by proposition \ref{prop:Dirichletform} we see that $R(f) = 0$. This in turn implies that the set $B_0$ defined by
\begin{equation*}
\left\{\eta \; : \; \exists \; T \subset S \text{ finite}, \zeta \in W^T, \text{ such that } c_T(\eta,\zeta) > 0, \1_A(\eta) \neq \1_A(\theta_{T,\zeta}(\eta)) \right\} 
\end{equation*}
has $\mu$ measure zero. Let $A_0 = A$. Define $A_1 = A_0 \setminus B_0$ and note that $\1_{A_0} = \1_{A_1} \; \mu$ a.s. because $\mu(B_0) = 0$. This means that 
\begin{equation*}
S_t^\mu \1_{A_1} = S_t^\mu \1_{A_0} = \1_{A_0} = \1_{A_1} \quad \mu \text{ a.s.}
\end{equation*}
This yields that the set $B_1$ given by 
\begin{equation*}
\left\{\eta \; : \; \exists \; T \subset S \text{ finite }, \zeta \in W^T, \text{ such that } c_T(\eta,\zeta) > 0, \1_{A_1}(\eta) \neq \1_{A_1}(\theta_{T,\zeta}(\eta)) \right\} 
\end{equation*}
has measure $0$. Define $A_2 = A_1 \setminus B_1$. We can repeat this step and construct $A_3$, $A_4$, $\dots$. Note that $A_{n+1} \subset A_n$ and $\mu(A_{n+1}) = \mu(A_n)$ for all $n$. Define $A_\infty = \bigcap_n A_n$ and note that $\mu(A_\infty) = \mu(A)$.

\bigskip

We show that $A_\infty \in \mathcal{G}_L$. Suppose $\eta \in A_\infty$, $T \subset S$ finite and $\zeta \in W^S$ such that $c_T(\eta,\zeta) > 0$, we must prove that $\theta_{T,\zeta}(\eta) \in A_\infty$. This is not to difficult, suppose that $\theta_{T,\zeta}(\eta) \notin A_\infty$, then there is a $N >0$ so that for all $n \geq N \;$ $\theta_{T,\zeta}(\eta) \notin A_n$, but then for all $n > N \;$ $\eta \notin A_n$, so that it follows that $\eta \notin A$ which is a contradiction.

\bigskip

This means that $\mu(A) = \mu(A_\infty) \in \{0,1\}$ because of triviality of $\mathcal{G}_L$ under $\mu$.
\end{proof}

\section{Proof of proposition \ref{prop:invariantmeasures}} \label{sect:product}

First we give a consequence of the definition of the product measures in $\mathcal{P}_\otimes(b)$.
Let $A$ be the set defined by
\begin{equation*}
A = \begin{cases}
\{\eta_y >0\}\cap\{\eta_x< N \} & \text{ if }  W = \{0,\dots,N\} \\
\{\eta_y >0\} & \text{ if }  W = \N.
\end{cases}
\end{equation*}
On the set $A$ define $\mu_\lambda^{y,x}$ to be the measure obtained from $\mu_\lambda$ by the transformation $\eta \mapsto \eta^{y,x}$, i.e. $\1_A \mu_\lambda^{y,x}(\dd \eta) = \1_A \mu_\lambda(\dd \eta^{y,x})$. 

\begin{Lem} \label{lem:RNderivative}
For $\mu_\lambda$ the Radon-Nikodym derivative corresponding to the change of variables $\eta^{y,x}$ to $\eta$ is:
\begin{equation*}
\1_A \frac{\dd \mu_\lambda^{y,x}}{\dd \mu_\lambda}(\eta) = \frac{b(\eta_y,\eta_x)}{b(\eta_x+1,\eta_y -1)} \frac{\lambda_x}{\lambda_y}
\end{equation*}
\end{Lem}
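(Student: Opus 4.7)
The proof will be a direct computation that leverages three ingredients: (i) the product structure of $\mu_\lambda$, (ii) the recursive definition of the normalizing constants $a_k$, and (iii) the algebraic identity in Assumption \ref{Ass:1}.

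First I would observe that since $\eta$ and $\eta^{y,x}$ differ only at the two sites $x$ and $y$, the product structure of $\mu_\lambda$ implies that the Radon-Nikodym derivative at $\eta$ reduces to the product of two one-site ratios:
\begin{equation*}
\frac{\dd \mu_\lambda^{y,x}}{\dd \mu_\lambda}(\eta) \;=\; \frac{\mu_{\lambda_y}(\eta_y - 1)}{\mu_{\lambda_y}(\eta_y)} \cdot \frac{\mu_{\lambda_x}(\eta_x + 1)}{\mu_{\lambda_x}(\eta_x)}.
\end{equation*}
The indicator of $A$ guarantees that $\eta_y - 1 \geq 0$, and in the finite case also $\eta_x + 1 \leq N$, so the right-hand side is well-defined.

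Next I would plug in the marginal formula $\mu_{\lambda_x}(n) = Z_{\lambda_x}^{-1} a_n \lambda_x^n$. The normalizing constants $Z_{\lambda_x}$ and $Z_{\lambda_y}$ cancel, leaving
\begin{equation*}
\frac{a_{\eta_y - 1}}{a_{\eta_y}} \cdot \frac{a_{\eta_x + 1}}{a_{\eta_x}} \cdot \frac{\lambda_x}{\lambda_y}.
\end{equation*}
Using the recursion $a_{k+1}/a_k = b(1,k)/b(k+1,0)$ from (\ref{eqn:defa}), the two $a$-ratios simplify to $b(\eta_y, 0)/b(1, \eta_y - 1)$ and $b(1, \eta_x)/b(\eta_x + 1, 0)$ respectively.

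The final step is to apply Assumption \ref{Ass:1} with $i = \eta_x$ and $j = \eta_y$, which says exactly
\begin{equation*}
\frac{b(\eta_y, 0)}{b(1, \eta_y - 1)} \cdot \frac{b(1, \eta_x)}{b(\eta_x + 1, 0)} \;=\; \frac{b(\eta_y, \eta_x)}{b(\eta_x + 1, \eta_y - 1)},
\end{equation*}
after rewriting the hypothesis as a ratio. Substituting gives the claimed formula. There is no real obstacle here; the only point requiring a bit of care is matching the indices in Assumption \ref{Ass:1} correctly, and handling the boundary cases by means of the set $A$.
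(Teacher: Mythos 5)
Your proposal is correct and follows essentially the same route as the paper's proof: reduce to the two affected marginals, cancel the normalizing constants to get the ratio $\frac{a_{\eta_x+1}}{a_{\eta_x}}\frac{a_{\eta_y-1}}{a_{\eta_y}}\frac{\lambda_x}{\lambda_y}$, rewrite the $a$-ratios via the recursion in (\ref{eqn:defa}), and conclude with Assumption \ref{Ass:1}. The only difference is that you spell out the intermediate one-site ratio step explicitly, which the paper leaves implicit.
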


\begin{proof}
The transformation $\eta \mapsto \eta^{y,x}$ only affects two coordinates, hence
\begin{align*}
\1_A \frac{\dd \mu_\lambda^{y,x}}{\dd \mu_\lambda}(\eta) & = \1_A \frac{a_{\eta_x+1}}{a_{\eta_x}} \frac{a_{\eta_y-1}}{a_{\eta_y}}  \frac{\lambda_x}{\lambda_y}\\
& = \frac{b(1,\eta_x)}{b(\eta_x+1,0)} \frac{b(\eta_y,0)}{b(1,\eta_y-1)} \frac{\lambda_x}{\lambda_y} \\
& = \frac{b(\eta_y,\eta_x)}{b(\eta_x+1,\eta_y -1)} \frac{\lambda_x}{\lambda_y}.
\end{align*}
In the last line we use assumption \ref{Ass:1}.
\end{proof}

We start the proof of proposition \ref{prop:invariantmeasures} with:
\begin{Lem} \label{lem:revp} 
Let $\lambda : S \rightarrow \R^+$ be a solution of $\sum_x \lambda_x p(x,y) = \sum_x \lambda_y p(y,x)$ and suppose that if $\lambda_x p(x,y) \neq \lambda_y p(y,x)$, then $\lambda_x = \lambda_y$.
Define the relation $x \thicksim y$ if $\lambda_x p(x,y) = \lambda_y p(y,x)$. Then it holds that
\begin{equation*}
\sum_{x \nsim y} p(x,y) = \sum_{x \nsim y} p(y,x).
\end{equation*}
\end{Lem}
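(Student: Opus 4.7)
The plan is to fix an arbitrary $y \in S$ and split the defining relation $\sum_x \lambda_x p(x,y) = \lambda_y \sum_x p(y,x)$ according to whether $x \sim y$ or $x \nsim y$, then cancel the contribution of the $\sim$-part using the detailed balance it automatically satisfies.

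More concretely, I would first rewrite the hypothesis as
\begin{equation*}
\sum_{x \sim y} \lambda_x p(x,y) + \sum_{x \nsim y} \lambda_x p(x,y) = \lambda_y \sum_{x \sim y} p(y,x) + \lambda_y \sum_{x \nsim y} p(y,x).
\end{equation*}
The first term on each side is equal: by the very definition of $\sim$, $\lambda_x p(x,y) = \lambda_y p(y,x)$ whenever $x \sim y$, so $\sum_{x\sim y}\lambda_x p(x,y) = \lambda_y \sum_{x\sim y} p(y,x)$. Subtracting those equal pieces leaves
\begin{equation*}
\sum_{x \nsim y} \lambda_x p(x,y) = \lambda_y \sum_{x \nsim y} p(y,x).
\end{equation*}

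Now the extra hypothesis kicks in: for every $x$ with $x \nsim y$ we have $\lambda_x = \lambda_y$. Hence on the left we may replace $\lambda_x$ by $\lambda_y$ and pull it out of the sum. Dividing through by $\lambda_y > 0$ (which is allowed since $\lambda$ takes values in $\R^+$) yields exactly
\begin{equation*}
\sum_{x \nsim y} p(x,y) = \sum_{x \nsim y} p(y,x),
\end{equation*}
which is the claim.

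There is essentially no obstacle here; the proof is a one-line bookkeeping argument once one notices that detailed balance holds automatically on the $\sim$-part and that the complementary part is concentrated on sites where $\lambda_x = \lambda_y$, so all $\lambda$-factors collapse to the common value $\lambda_y$. The only mild subtlety is making sure the sums are absolutely convergent so that splitting and subtraction are legal, but this is guaranteed because $p$ is finite range (Assumption \ref{Ass:Cp}), so for each fixed $y$ only finitely many $x$ contribute.
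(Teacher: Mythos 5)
Your proof is correct and is essentially identical to the paper's own argument: both split $\sum_x \lambda_x p(x,y) = \lambda_y \sum_x p(y,x)$ into the $\sim$ and $\nsim$ parts, cancel the $\sim$ contributions via detailed balance, replace $\lambda_x$ by $\lambda_y$ on the $\nsim$ part, and divide by $\lambda_y$. Your added remark that finite range makes the sums finite for each fixed $y$ is a harmless (and correct) extra precaution.
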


\begin{proof}
This is a short calculation.
\begin{align*}
\sum_x \lambda_y p(y,x)& = \sum_x \lambda_x p(x,y) \\
& = \sum_{x \thicksim y} \lambda_x p(x,y) + \sum_{x \nsim y} \lambda_x p(x,y) \\
& = \sum_{x \thicksim y} \lambda_y p(y,x) + \sum_{x \nsim y} \lambda_y p(x,y)
\end{align*}
\end{proof}

\begin{proof}[Proof of proposition \ref{prop:invariantmeasures}]
Fix $L^{b,p}$ and pick a measure $\mu_\lambda \in \mathcal{P}_\otimes(b)$.

Let $f \in D$. We are allowed to rearrange the terms in the next calculation, because $f$ is a local function and $p$ is finite range.
\begin{equation} \label{eqn:omzetting}
\begin{aligned}
\int Lf \dd \mu_\lambda & = \int \sum_{x,y} p(x,y) b(\eta_x,\eta_y) (f(\eta^{x,y}) - f(\eta)) \mu_\lambda(\dd \eta) \\
& = \int \sum_{x,y} p(x,y) b(\eta_x^{y,x},\eta_y^{y,x}) f(\eta) \mu_\lambda^{y,x}(\dd \eta) \\
& \quad \quad - \int \sum_{x,y} p(x,y) b(\eta_x,\eta_y) f(\eta) \mu_\lambda(\dd \eta) \\
& = \int \sum_{x,y} p(x,y) \frac{\lambda_x}{\lambda_y} b(\eta_y,\eta_x) f(\eta) \mu_\lambda(\dd \eta) \\
& \quad \quad - \int \sum_{x,y} p(x,y) b(\eta_x,\eta_y) f(\eta) \mu_\lambda(\dd \eta) \\
& = \int f(\eta) \sum_{x,y} b(\eta_x, \eta_y) \left[ p(y,x) \frac{\lambda_y}{\lambda_x} - p(x,y) \right] \mu_\lambda(\dd \eta)
\end{aligned}
\end{equation}

We arrive at the fourth line by using lemma \ref{lem:RNderivative} on the first term. We obtain the last expression by changing the roles of $x$ and $y$ in the first term. 

\bigskip

Clearly if for all $x,y$ that $\lambda_x p(x,y) = \lambda_y p(y,x)$ the integral is $0$. This is case (c). Now suppose that this is not the case and we have a pair $i,j$ such that $\lambda_i p(i,j) \neq \lambda_j p(j,i)$. Then the above argument does not work. Note that if we can prove that that for all $\eta_x, \eta_y$ it holds that $\sum_{x,y} b(\eta_x, \eta_y) [ p(y,x) \frac{\lambda_y}{\lambda_x} - p(x,y) ] = 0$, then we are done. We start with 1.15(b), suppose that if $x \nsim y$ then $\lambda_x = \lambda_y$.
\begin{align*}
& \sum_{x,y} b(\eta_x, \eta_y) \left[ p(y,x) \frac{\lambda_y}{\lambda_x} - p(x,y) \right] \\
& = \sum_x \sum_{y \nsim x} b(\eta_x, \eta_y) [p(y,x) - p(x,y)] \\
& = \sum_x \sum_{y \nsim x} b(\eta_x, \eta_y) p(y,x) - \sum_x \sum_{y \nsim x} b(\eta_x, \eta_y) p(x,y) \\
& = \sum_x \sum_{y \nsim x} b(\eta_x, \eta_y) p(y,x) - \sum_y \sum_{x \nsim y} b(\eta_y, \eta_x) p(y,x) \\
& = \sum_x \sum_{y \nsim x} b(\eta_x, \eta_y) p(y,x) - \sum_x \sum_{y \nsim x} b(\eta_y, \eta_x) p(y,x) \\
& = \sum_x \sum_{y \nsim x} p(y,x) [b(\eta_x, \eta_y) - b(\eta_y,\eta_x)] \\
& = \sum_x \sum_{y \nsim x} p(y,x) [b(\eta_x, 0) - b(\eta_y,0)] \\
& = \sum_x \sum_{y \nsim x} b(\eta_x,0) [p(y,x) - p(x,y)] \\
& = \sum_x b(\eta_x,0) \sum_{y \nsim x}  [p(y,x) - p(x,y)] \\
& = 0
\end{align*}
In line five we use that $\thicksim$ is a symmetric relation. In line seven we use the second item in the assumptions. and in line eight we switch back the way we switched forward in lines two to five. In the last line we use lemma \ref{lem:revp}.

For the proof of item (a) note that the method above does not work in this case as we cannot use reversibility or the relation $\thicksim$. However $b(n,k)$ reduces to $b(n,0)$. We check again that $\sum_{x,y} b(\eta_x, \eta_y) [ p(y,x) \frac{\lambda_y}{\lambda_x} - p(x,y) ] = 0$.
\begin{align*}
& \sum_{x,y} b(\eta_x, \eta_y) \left[ p(y,x) \frac{\lambda_y}{\lambda_x} - p(x,y) \right] \\
& = \sum_x b(\eta_x,0) \sum_ y \left[ p(y,x) \frac{\lambda_y}{\lambda_x} - p(x,y) \right] \\
& = 0
\end{align*}
The last equality is due to the primary assumption on $\lambda$: $\sum_x \lambda_x p(x,y) = \lambda_y \sum_x p(y,x)$.

\bigskip

We now prove that an invariant measure in the set $\mathcal{P}_\otimes(b)$ must be of one of the three given types. Pick a point $z \in S$ and a finite set $B(z)$ containing $z$ such that if $x \notin B(z)$, then $p(z,x) = p(x,z)= 0$, i.e. $B(z)$ contains all points that can be reached by $p$ from $z$. Let $F(z) = \{\eta \; : \; \text{ if } x \in B(z) \setminus \{z\} \text{ then } \eta_x = 0 \}$, furthermore let $F^*(z) = \{\eta \; : \; \text{ if } x \in B(z) \text{ then } \eta_x = 0 \}$. Note that $\1_{F(z)}$ and $\1_{F^*(z)}$ are local bounded functions, hence in $D$. Now fix some generator $L = L^{b,p}$ for which we know $p$, but do not know the specific form of $b$.

\bigskip

Suppose that we have a product measure $\mu_\lambda \in \mathcal{P}_\otimes(b)$, for a nonzero $\lambda$ and suppose that $\mu_\lambda$ is invariant for the process generated by $L^{b,p}$. This yields $\int L\1_{F(z)} \dd \mu_\lambda = 0$ and $\int  L\1_{F^*(z)} \dd \mu_\lambda = 0$ by equation (\ref{eqn:I}). We now look at these integrals. First note that by definition of our indicator function, we only have to look at couples where $x$ or $y$ is in $B(z)$. The second equality is due to a calculation similar to the one in \eqref{eqn:omzetting}.
\begin{equation} \label{eqn:lambdacondition1}
\begin{aligned} 
0 & = \int L\1_{F(z)} \dd \mu_\lambda \\
& = \sum_{x \text{ or } y \in B(z)}  p(x,y) \left(\1_{F(z)}(\eta^{x,y}) - \1_{F(z)}(\eta) \right) \mu_\lambda(\dd \eta) \\
& = \int \1_{F(z)}(\eta) \sum_{y \in B(z)} b(\eta_z,0) \left[p(y,z) \frac{\lambda_y}{\lambda_z} - p(z,y) \right]  \mu_\lambda(\dd \eta) \\
& \quad + \int \1_{F(z)}(\eta) \sum_{x \notin B(z)} \sum_{y \in B(z)} b(\eta_x,0) \left[p(y,x) \frac{\lambda_y}{\lambda_x} - p(x,y) \right] \mu_\lambda(\dd \eta)
\end{aligned}
\end{equation} 
When using the same methods on the function $\1_{F^*(z)} \mu_\lambda(\eta_z = 0)^{-1}$ we obtain 
\begin{equation*} 
\begin{aligned}
0 & = \mu_\lambda(\eta_z = 0)^{-1} \int L\1_{F^*(z)} \dd \mu_\lambda \\
& = \mu_\lambda(\eta_z = 0)^{-1} \int \1_{F^*(z)}(\eta) \sum_{x \notin B(z)} \sum_{y \in B(z)} b(\eta_x,0) \left[p(y,x) \frac{\lambda_y}{\lambda_x} - p(x,y) \right] \mu_\lambda(\dd \eta)
\end{aligned}
\end{equation*} 
We know that $\mu_\lambda$ is a product measure and in the last line the only term involving the integral over $\eta_z$ is the function $\1_{F^*(z)}$, but clearly $\1_{F^*(z)} = \1_{F(z)} \1_{\{\eta_z = 0\}}$. Hence we can first integrate over $\eta_z$ such that the normalising term disappears and then add the integral over $\eta_z$, because it integrates to $1$:
\begin{equation} \label{eqn:lambdacondition2}
\begin{aligned}
0 & = \mu_\lambda(\eta_z = 0)^{-1} \int \1_{F^*(z)}(\eta) \sum_{x \notin B(z)} \sum_{y \in B(z)} b(\eta_x,0) \left[p(y,x) \frac{\lambda_y}{\lambda_x} - p(x,y) \right] \mu_\lambda(\dd \eta) \\
& = \int \1_{F(z)}(\eta) \sum_{x \notin B(z)} \sum_{y \in B(z)} b(\eta_x,0) \left[p(y,x) \frac{\lambda_y}{\lambda_x} - p(x,y) \right] \mu_\lambda(\dd \eta)
\end{aligned}
\end{equation} 
Combining \eqref{eqn:lambdacondition1} and \eqref{eqn:lambdacondition2} we obtain that
\begin{equation} \label{eqn:uitkomstpfinite1}
0 = \int \1_{F(z)}(\eta) \sum_{y \in B(z)} b(\eta_z,0) \left[p(y,z) \frac{\lambda_y}{\lambda_z} - p(z,y) \right] \mu_\lambda(\dd \eta)
\end{equation}
After integrating over $\eta_z$ we obtain that $ \sum_y \left[p(y,z) \frac{\lambda_y}{\lambda_z} - p(z,y) \right] = 0$, hence $\lambda$ solves $\sum_y \lambda_y p(y,z) = \lambda_z \sum_y p(z,y)$.

\bigskip

For the subdivision into items (1), (2) and (3) we adapt the above argument by looking at two sites. Pick two distinct sites $z$ and $w$ such that $p(z,w)$ or $p(w,z)$ is nonzero. Fix a finite set $B(z,w) \subset S$ containing $z$ and $w$ such that if $y \notin B(z,w)$ then $p(z,y) = p(y,z) = p(w,y) = p(w,y) = 0$. Let $F(z,w) = \{\eta \; : \; \text{ if } x \in B(z,w) \setminus \{z, w\} \text{ then } \eta_x = 0, \eta_z = n, \eta_w = k \}$ and let $F^*(z,w) = \{\eta \; : \; \text{ if } x \in B(z,w) \text{ then } \eta_x = 0 \}$. We do a similar calculation.

\begin{align*}
0 & = \int L\1_{F(z,w)}  \dd \mu_\lambda \\
& = \int \1_{F(z,w)}(\eta) b(\eta_z, \eta_w) \left[ p(w,z) \frac{\lambda_w}{\lambda_z} - p(z,w) \right]\mu_\lambda(\dd \eta) \\
& \quad + \int \1_{F(z,w)}(\eta) \sum_{ y \in B(z,w)\setminus \{w\}} b(\eta_z, \eta_y) \left[ p(y,z) \frac{\lambda_y}{\lambda_z} - p(z,y) \right]\mu_\lambda(\dd \eta) \\
& \quad + \int \1_{F(z,w)}(\eta) b(\eta_w, \eta_z) \left[ p(z,w) \frac{\lambda_z}{\lambda_w} - p(w,z) \right]\mu_\lambda(\dd \eta) \\
& \quad + \int \1_{F(z,w)}(\eta)  \sum_{ y \in B(z,w)\setminus \{z\}} b(\eta_w, \eta_y) \left[ p(y,w) \frac{\lambda_y}{\lambda_w} - p(w,y) \right]\mu_\lambda(\dd \eta) \\
& \quad + \int \1_{F(z,w)}(\eta) \sum_{x \notin B(z,w)} \sum_{y \in B(z,w)} b(\eta_w, \eta_y) \left[ p(y,w) \frac{\lambda_y}{\lambda_x} - p(x,y) \right]\mu_\lambda(\dd \eta) \\
\end{align*}
We clarify the last expression. We have split the sum over $x$ and $y$ into a number of parts, in the first two lines $x = z$, in the second two lines $x = w$ and in the last line we sum over $x \notin B(z,w)$. The sum over $x \in B(z,w) \setminus \{z,w \}$ does not play a role because on the set $F(z,w)$ we integrate over $b(0,\cdot) = 0$.
The term in the last line is $0$ because it is equal to
\begin{equation*}
\frac{\mu_\lambda(\eta_z = n)}{\mu_\lambda(\eta_z = 0)} \frac{\mu_\lambda(\eta_w = k)}{\mu_\lambda(\eta_w = 0)} \int L \1_{F^*(z,w)} \dd \mu_\lambda
\end{equation*}
just like in the argument where we singled out only one point in $S$. We obtain that
\begin{equation*}
\begin{aligned}
0 & =  \sum_{y \neq w} b(n,0)\left[ p(y,z) \frac{\lambda_y}{\lambda_z} - p(z,y)\right] \\
& \quad +  \sum_{y \neq z} b(k,0)\left[ p(y,w) \frac{\lambda_y}{\lambda_w} - p(w,y)\right] \\
& \quad +  b(n,k) \left[ p(w,z) \frac{\lambda_w}{\lambda_z} - p(z,w)\right] \\
& \quad +  b(k,n) \left[ p(z,w) \frac{\lambda_z}{\lambda_w} - p(w,z)\right]
\end{aligned}
\end{equation*}

Now we add the terms missing from the first two sums and subtract them from the last two sums:
\begin{equation*}
\begin{aligned}
0 & = b(n,0) \sum_{y}\left[ p(y,z) \frac{\lambda_y}{\lambda_z} - p(z,y)\right] \\
& \quad + b(k,0) \sum_{y} \left[ p(y,w) \frac{\lambda_y}{\lambda_w} - p(w,y)\right] \\
& \quad +  \left[b(n,k) - b(n,0)\right] \left[ p(w,z) \frac{\lambda_w}{\lambda_z} - p(z,w)\right] \\
& \quad +  \left[b(k,n) - b(k,0)\right] \left[ p(z,w) \frac{\lambda_z}{\lambda_w} - p(w,z)\right]
\end{aligned}
\end{equation*}

Note that the first 2 lines are zero because $\lambda$ satisfies 
\begin{equation*}
\sum_x \lambda_x p(x,y) = \sum_x \lambda_y p(y,x). 
\end{equation*}
Therefore:
\begin{equation} \label{eqn:taggedsufficient}
\begin{aligned}
0 & =   \left[b(n,k)-b(n,0) \right]\left[ p(w,z) \frac{\lambda_w}{\lambda_z} - p(z,w)\right] \\
& \quad +  \left[b(k,n)-b(k,0) \right]\left[ p(z,w) \frac{\lambda_z}{\lambda_w} - p(w,z)\right]
\end{aligned}
\end{equation}
From equation (\ref{eqn:taggedsufficient}) we can now derive necessary conditions for $\lambda$ to yield an invariant measure for a certain $b$.

\bigskip

Suppose that $b$ does not depend on the second variable, $b(n,k) = b(n,0)$, then we see that this yields $0$ in equation (\ref{eqn:taggedsufficient}). This is option (a) in the proposition. 
Now suppose that we have a $k$ so that $b(n,k) \neq b(n,0)$. We can rewrite equation (\ref{eqn:taggedsufficient}) to the following equality:
\begin{align*}
& \left[b(n,k) - b(n,0) \right] \left[p(w,z) \frac{\lambda_w}{\lambda_z} - p(z,w) \right] \\
& \quad = \frac{\lambda_z}{\lambda_w}\left[b(k,n) - b(k,0) \right] \left[p(w,z) \frac{\lambda_w}{\lambda_z} - p(z,w) \right]
\end{align*}
This gives by the assumption that $p(z,w) + p(w,z) >0$ either 
\begin{equation} \label{eqn:taggedsuff1}
\left[b(n,k) - b(n,0) \right] = \frac{\lambda_z}{\lambda_w}\left[b(k,n) - b(k,0) \right]
\end{equation}
or
\begin{equation} \label{eqn:taggedsuff2}
p(w,z) \frac{\lambda_w}{\lambda_z} - p(z,w) = 0
\end{equation}
Now we can simply take $\lambda$ to be reversible, which is exactly equation (\ref{eqn:taggedsuff2}). This is option (c) in the proposition. Suppose that we have two sites $z$ and $w$ so that $\lambda$ is not reversible: $p(w,z) \frac{\lambda_z}{\lambda_w} - p(z,w) \neq 0$, then equation (\ref{eqn:taggedsuff1}) must hold.

\bigskip
Now suppose that $\frac{\lambda_z}{\lambda_w} = c \neq 1$, then $b(n,k) - b(n,0) = c( b(k,n) - b(k,0))$. Note that because $b(n,k) \neq b(n,0)$ clearly $b(k,n) \neq b(k,0)$, so we could have made the same argument with $n$ and $k$ the other way around to obtain that $b(n,k) - b(n,0) = c^{-1}( b(k,n) - b(k,0))$, a contradiction.
Hence the assumption that $p(w,z) \frac{\lambda_w}{\lambda_z} - p(z,w) \neq 0$ leads to the fact that $\lambda_z = \lambda_w$. And this in turn leads by equation (\ref{eqn:taggedsuff1}) to $b(n,k) - b(k,n) = b(n,0) - b(k,0)$. This is option (b) in the proposition.

\end{proof}

\section{Proof of theorem \ref{The:ergodicproduct} and a counterexample} \label{sect:proof}

We prove the following two theorems which imply theorem \ref{The:ergodicproduct} by using corollary \ref{Cor:ergodicity}. Let $\mu_\lambda \in \mathcal{I}(L^{b,p}) \cap \mathcal{P}_\otimes(b)$, which we will denote by $\PR$ in this section.

\begin{The} \label{The:equivA}
In the case that $W = \{0, \dots, N\}$ the following are equivalent.
\begin{enumerate}
\item $\mathcal{A}$ is trivial.
\item $ \sum_{i : \lambda_i < 1} \PR[\eta_i > 0] + \sum_{i: \lambda_i \geq 1} \PR[\eta_i < N] = \infty $
\item $ \sum_{i : \lambda_i < 1} \lambda_i + \sum_{i: \lambda_i \geq 1} \frac{1}{\lambda_i} = \infty$
\end{enumerate}
\end{The}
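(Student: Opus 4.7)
The plan is to prove the three-way equivalence by first establishing $(2) \Leftrightarrow (3)$ by one-site estimates, and then showing $(1) \Leftrightarrow (2)$ in two separate steps. For $(2) \Leftrightarrow (3)$, I would note that for $\lambda_i \in [0,1)$ we have $\PR[\eta_i > 0] = 1 - Z_{\lambda_i}^{-1}$; since $Z_{\lambda_i}$ ranges over the bounded interval $[1, Z_1]$ on this domain, $\PR[\eta_i > 0]$ is comparable to $\lambda_i$ with multiplicative constants depending only on $b$ and $N$. For $\lambda_i \geq 1$ I would invoke the anti-particle symmetry of Remark 2: the substitution $\tilde\eta_i = N - \eta_i$ converts $\mu_{\lambda_i}$ into a product measure of the same form with parameter $1/\lambda_i \in (0,1]$, so $\PR[\eta_i < N] = \PR[\tilde\eta_i > 0]$ is comparable to $1/\lambda_i$ by the previous bound.

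For $\neg(2) \Rightarrow \neg(1)$: suppose the sum in (2) is finite. Because the $\eta_i$ are independent under $\mu_\lambda$, Borel--Cantelli forces that, almost surely, only finitely many $i \in J_< := \{i : \lambda_i < 1\}$ have $\eta_i > 0$ and only finitely many $i \in J_\geq := \{i : \lambda_i \geq 1\}$ have $\eta_i < N$. The integer-valued random variable
\begin{equation*}
T = \sum_{i \in J_<} \eta_i - \sum_{i \in J_\geq} (N - \eta_i)
\end{equation*}
is therefore a.s.\ finite. Writing $S_n = \sum_{\phi(x) \leq n} \eta_x$, a direct rearrangement shows $T = \lim_n \bigl(S_n - N |\{i \in J_\geq : \phi(i) \leq n\}|\bigr)$, and since the subtracted quantity is deterministic, $T$ is $\mathcal{H}_n$-measurable for every $n$, hence $\mathcal{H}$-measurable. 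As the marginals of $\mu_\lambda$ are non-degenerate, $T$ is not a.s.\ constant, so $\mathcal{H} = \mathcal{A}$ is non-trivial.

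The main step is $(2) \Rightarrow (1)$, for which I plan to use a coupling argument in the spirit of Jung's proof for the exclusion process, adapted to the finite state space $\{0, \dots, N\}$ and general $b$ via the anti-particle symmetry. Given $A \in \mathcal{A}$, the goal is to couple two independent samples $\eta, \eta'$ from $\mu_\lambda$ so that within every sufficiently large finite block of $S$ the local restrictions of $\eta$ and $\eta'$ can be matched by a finite chain of admissible particle moves. Because $\1_A$ is invariant under every finite chain of moves, such a coupling yields $\1_A(\eta) = \1_A(\eta')$ a.s., hence $\mu(A) = \mu(A)^2$ and $\mu(A) \in \{0,1\}$. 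Condition (2), via independence and Borel--Cantelli, supplies the raw material: in each large block there are, with overwhelming probability, many ``free'' particles at sites of $J_<$ and many vacancies at sites of $J_\geq$, which can be used to route particles between the two samples.

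The main technical obstacle is the construction of this block coupling: one must keep the marginal of each sample equal to $\mu_\lambda$ while producing, on each block, pairs that differ only by admissible rearrangements, and these local corrections must be organised into a global coupling whose error is controlled uniformly over the growing blocks. The anti-particle symmetry of Remark 2 helps here too, since it makes the supply of particles at $J_<$-sites and of vacancies at $J_\geq$-sites completely symmetric in the analysis, allowing a single argument to treat both populations simultaneously.
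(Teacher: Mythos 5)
Your treatment of $(2)\Leftrightarrow(3)$ and of $\neg(2)\Rightarrow\neg(1)$ is sound and matches what the paper does (it leaves the first to the reader and dispatches the second with Borel--Cantelli; your explicit tail variable $T$ is a reasonable way to make the latter concrete, modulo the degenerate case $\lambda\equiv 0$). The problem is the main implication $(2)\Rightarrow(1)$, which is the entire content of the theorem: you do not prove it. You describe a ``block coupling'' of two configurations and then explicitly flag its construction as ``the main technical obstacle'' --- but that obstacle \emph{is} the proof, and the sketch around it does not hold together as stated. If $\eta$ and $\eta'$ are genuinely independent samples from $\mu_\lambda$, you cannot also arrange $\1_A(\eta)=\1_A(\eta')$ a.s.\ unless $\mu_\lambda(A)\in\{0,1\}$ already, so the conclusion $\mu(A)=\mu(A)^2$ is circular; and a coupling under which $\eta$ and $\eta'$ are a.s.\ related by finitely many admissible moves requires them to agree outside a finite box \emph{and} to carry the same particle count inside it, which is a global constraint your block-by-block description does not address.

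The paper circumvents all of this by changing the object being coupled. By Lemma \ref{Lem:AH}, $\mathcal{A}=\mathcal{H}$, the tail $\sigma$-algebra of the partial sums $Z_n=\sum_{\phi(x)\le n}\eta_x$; under $\mu_\lambda$ these form a time-inhomogeneous random walk with independent increments distributed as $\mu_{\lambda_n}$. Theorem \ref{The:basiccoupling} (Iosifescu) reduces triviality of $\mathcal{H}$ to total-variation merging of two such walks started from different states, and this is achieved by the Mineka coupling, which succeeds precisely when
\begin{equation*}
\sum_i \sum_k \PR[\eta_i = k]\wedge \PR[\eta_i=k+1]=\infty
\end{equation*}
(Corollary \ref{Cor:basictrivialitydemand}). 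The remaining work is then a pair of explicit one-site estimates showing that this divergence follows from $(3)$, splitting into the cases $\sum_{\lambda_i<1}\lambda_i=\infty$ (use $\PR[\eta_i=0]\wedge\PR[\eta_i=1]\ge \lambda_i/Z_1$) and $\sum_{\lambda_i\ge 1}1/\lambda_i=\infty$ (use the corresponding bound near level $N$). Your anti-particle symmetry observation is compatible with, but no substitute for, this reduction; to complete your argument you would need to either carry out the one-dimensional partial-sum coupling or actually construct the configuration-level coupling you postulate, and the former is what the paper does.
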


In the case that $W = \N$ there is no equivalent of $N$, so we need to adjust (2) and (3).

\begin{The} \label{The:equivA2}
Under the assumption $(\ast \ast)$ that $\lambda^*<\infty$ or that $\lambda^* = \infty$ and there exists a finite set $\mathfrak{D} = \{d_1, \dots, d_p\}$ so that $\gcd(\mathfrak{D}) = 1$ such that
\begin{equation}
\mathfrak{D} \subset \left\{d \geq 1 \; : \; \sup_{k} \frac{a_k^2}{a_{k-d}a_{k+d}} = \sup_k \prod_{i=0}^{d-1}\frac{b(k+i+1,k-i-1)}{b(k-i,k+i)}< \infty \right\}
\end{equation}
then we have the following equivalence.
\begin{enumerate}
\item $\mathcal{A}$ is trivial.
\item $\sum_i \PR[\eta_i > 0] = \infty$.
\item $\sum_i \lambda_i = \infty$.
\end{enumerate}
\end{The}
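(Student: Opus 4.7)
The plan is to establish the three implications $(2) \Leftrightarrow (3)$, $(\lnot 2) \Rightarrow (\lnot 1)$ and $(2) \Rightarrow (1)$; the hypothesis $(\ast\ast)$ enters only in the last.

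For $(2) \Leftrightarrow (3)$, a direct computation gives $\PR[\eta_i > 0] = 1 - Z_{\lambda_i}^{-1}$, which is comparable to $a_1 \lambda_i$ when $\lambda_i$ is small and bounded below by a positive constant when $\lambda_i \geq 1$; splitting the sum at $\lambda_i = 1$ then yields that $\sum_i \PR[\eta_i > 0]$ and $\sum_i \lambda_i$ diverge together. For $(\lnot 2) \Rightarrow (\lnot 1)$, applying Borel--Cantelli to the independent events $\{\eta_i > 0\}$, if $\sum_i \PR[\eta_i > 0] < \infty$ then $\mu_\lambda$-almost surely only finitely many coordinates of $\eta$ are non-zero, so $N(\eta) := \sum_i \eta_i$ is finite $\mu_\lambda$-a.s.; since $N$ is invariant under moving particles, the event $\{N = 0\}$ lies in $\mathcal{A}$ and has probability $\prod_i Z_{\lambda_i}^{-1} \in (0,1)$ whenever $\lambda \not\equiv 0$.

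The core of the proof is $(2) \Rightarrow (1)$. By lemma \ref{Lem:AH}, it suffices to show that the tail $\sigma$-algebra of partial sums $\mathcal{H}$ is trivial. Fix $A \in \mathcal{H}$; since $A \in \mathcal{H}_n$ for every $n$ we may write $\E[\1_A \mid \mathcal{B}_n] = h_n(S_n)$ with $h_n(s) := \PR(A \mid S_n = s)$, and L\'evy's martingale convergence theorem gives $h_n(S_n) \to \1_A$ $\mu_\lambda$-a.s. The invariance of $A$ under particle moves (moving $d$ particles out of $S_n$ onto site $j > n$) yields the functional identity $\1_A(s, x_{n+1}, x_{n+2}, \ldots) = \1_A(s - d, x_{n+1}, \ldots, x_j + d, \ldots)$ for all $j > n$, $d \geq 1$ and $s \geq d$, and a change of variable in the $j$th coordinate converts this into
\begin{equation*}
h_n(s) - h_n(s-d) = \E\!\left[\1_A(s-d, X_{n+1}, X_{n+2}, \ldots)\,(R_j^{(d)} - 1)\right], \quad R_j^{(d)} := \1_{X_j \geq d}\,\frac{a_{X_j-d}}{a_{X_j}}\,\lambda_j^{-d},
\end{equation*}
where $R_j^{(d)}$ has mean $1$ and its second moment $\E[(R_j^{(d)})^2] = Z_{\lambda_j}^{-1} \lambda_j^{-d} \sum_{m \geq 0} (a_m^2/a_{m+d})\lambda_j^m$ is controlled, via the bound $a_m^2 \leq C_d\, a_{m-d}\, a_{m+d}$ guaranteed by $(\ast\ast)$ for $d \in \mathfrak{D}$, and stays finite as soon as $\lambda_j$ is bounded away from $0$.

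The remainder of the argument is to leverage this variance estimate, together with the independence of $(X_j)_j$, to drive $h_n(S_n) - h_n(S_n - d) \to 0$ in $\mathcal{L}_2(\mu_\lambda)$ for every $d \in \mathfrak{D}$; the divergence $\sum_i \lambda_i = \infty$ furnishes enough candidate sites $j > n$ either directly (when infinitely many $\lambda_i$ are bounded away from zero) or through a suitable weighted averaging when $\lambda_i \to 0$. The aperiodicity assumption $\gcd(\mathfrak{D}) = 1$ then, by a Bezout combination of shifts of sizes $d_1, \ldots, d_p$, upgrades the estimate to every integer shift, forcing $h_n$ to become asymptotically constant on the support of $S_n$ and hence $\1_A$ to be $\mu_\lambda$-a.s.\ constant. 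The main technical obstacle is precisely this site-selection and averaging step; it parallels the argument of theorem 1.8 in \cite{ArtAldousPitman}, and the counterexample promised in section \ref{sec:deterministicprofile} illustrates that the aperiodicity component of $(\ast\ast)$ cannot be dropped: without it, $\sum_i \lambda_i = \infty$ no longer implies triviality of $\mathcal{A}$.
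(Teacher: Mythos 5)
Your reduction to $(2)\Leftrightarrow(3)$, $(\lnot 2)\Rightarrow(\lnot 1)$ and $(2)\Rightarrow(1)$ matches the paper's, and the first two implications are handled the same way (the paper also disposes of them via the definition of $a_k$ and Borel--Cantelli). For the main implication, however, you take a genuinely different route. The paper passes through lemma \ref{Lem:AH} to view $Z_n=\sum_{i\leq n}\eta_i$ as a time-inhomogeneous Markov chain, invokes Iosifescu's tail-triviality criterion (theorem \ref{The:basiccoupling}) to reduce the problem to constructing a successful coupling, and builds a Mineka coupling whose success is equivalent to $\sum_i\sum_k \PR[\eta_i=k]\wedge\PR[\eta_i=k+d]=\infty$ for $d\in\mathfrak{D}$ (lemma \ref{lem:demandcoupling}, theorem \ref{The:trivialitydemand}); the hypothesis $(\ast\ast)$ is then used to bound $\sum_{k\in M^*(\lambda_i)}\mu_{\lambda_i}(k)$ away from $1$. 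You instead run a direct $\mathcal{L}_2$ size-biasing argument on $h_n(s)=\PR(A\mid S_n=s)$. Your change-of-variables identity and the second-moment formula for $R_j^{(d)}$ are correct, and this is a legitimate alternative (closer in spirit to the Fourier/$\mathcal{L}_2$ proofs of tail triviality for sums of independent integer variables than to the coupling proof).

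The gap is that the step you yourself flag as ``the main technical obstacle'' is exactly where the theorem's hypotheses do their work, and it is not carried out. Concretely: averaging the identity over sites $j_1<\dots<j_M$ beyond $n$ with weights $w_l$ gives $|h_n(s)-h_n(s-d)|\leq \bigl(\sum_l w_l^2\,\mathrm{Var}(R_{j_l}^{(d)})\bigr)^{1/2}$, and with the optimal weights this tends to $0$ (in fact forces $h_n(s)=h_n(s-d)$, since infinitely many sites are available for fixed $n$) precisely when $\sum_{j>n}1/\mathrm{Var}(R_j^{(d)})=\infty$. Your bound $\mathrm{Var}(R_j^{(d)})\leq C_d$ holds only for $\lambda_j$ bounded away from $0$; for small $\lambda_j$ one has $\mathrm{Var}(R_j^{(d)})\asymp \lambda_j^{-d}$, and $\sum_i\lambda_i=\infty$ does \emph{not} imply $\sum_i\lambda_i^{d}=\infty$ for $d\geq 2$ (take $\lambda_i=1/i$). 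So if $1\notin\mathfrak{D}$ and $\lambda_i\to 0$, the averaging for $d\in\mathfrak{D}$ fails outright, and one must fall back on the $d=1$ shift, whose variance behaves like $\lambda_j^{-1}/a_1$ without any appeal to $(\ast\ast)$ --- this is the analogue of the paper's separate treatment of the case $\sum_{i:\lambda_i<1}\lambda_i=\infty$. Likewise, when $\lambda^*<\infty$ the paper uses assumption \ref{Ass:2} rather than $(\ast\ast)$ at all, whereas when $\lambda^*=\infty$ and $\lambda_i\to\infty$ the set $\mathfrak{D}$ is genuinely needed. Your proposal contains none of this case analysis, and the phrase ``a suitable weighted averaging'' conceals it. Finally, the endgame (from $h_n(s)=h_n(s-d)$ for $d\in\mathfrak{D}$, via Bezout/CRT, to $h_n$ constant on the range of $S_n$ and hence $\1_A$ a.s.\ constant, using that $S_n\to\infty$ a.s.\ under $(2)$) is correct in outline but is asserted rather than proved; it should be written out, since it is not a formality. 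As it stands the proposal is a sound plan with the decisive estimates still to be supplied.
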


We leave the proof of the equivalence of (2) and (3) to the reader, as this is straightforward by the definition in (\ref{eqn:defa}). The proof that (1) implies (2) is a consequence of Borel-Cantelli. Note that we do not need to assume $(\ast \ast)$ for these arguments.

\bigskip

The proof of (3) to (1) in both theorems uses a coupling argument. 

From lemma \ref{Lem:AH} we know that $\mathcal{A} = \mathcal{H}$, so we look at $\mathcal{H}$ instead. As $S$ is a countable set, we assume for the moment that it is equal to $\N$ to simplify the notation. Now define the partial sums $Z_n = \sum_{i =0}^n \eta_i$, then $\mathcal{H}$ is the tail $\sigma$-algebra of the $Z_n$.

\bigskip

Define transition matrices $p_n$ for every $n \in \N$ by $p_n(x,y) = \mu_{\lambda_n}(y-x)$. We see that the chain $(Z_n)_{n \geq 0}$ is a time inhomogeneous Markov chain with transition matrices $(p_n)_{n \geq 0}$. Let $p((t_0, k_0),(t,k))$ give the probability that the chain that is started at time $t_0$ in $k_0$ is in $k$ at time $t$, in other words $p((t_0, k_0),(t,k)) = (\prod_{i = t_0+1}^t p_i)(k_0,k)$.

\bigskip

 The next theorem is theorem 4 of Iosifescu \cite{IosifescuTail}, but can also be derived from theorem 4.1 in Thorisson \cite{Thorisson} or from theorem 20.10 in Kallenberg \cite{Kallenberg}.

\begin{The} \label{The:basiccoupling}
$\mathcal{H}$ is trivial if and only if
\begin{equation*}
\lim_{n \rightarrow \infty} \sum_{j} \left| p((n_0,s_0),(n,j)) - p((0,0),(n,j)) \right| = 0
\end{equation*}
for all $n_0 \in \N$ and $s_0 \in \N$.
\end{The}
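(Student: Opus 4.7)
The plan is to prove the two implications in Theorem \ref{The:basiccoupling} separately, exploiting the Markov property of $(Z_n)$ and the non-increase of total variation under Markov kernels; this is a standard characterization of tail triviality for time-inhomogeneous Markov chains.

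For the direction ``TV convergence implies triviality of $\mathcal{H}$'', I would pick $A \in \mathcal{H}$. Since $A \in \sigma(Z_m, Z_{m+1}, \ldots)$ for every $m$, the Markov property produces a function $h_n \colon \N \to [0,1]$ such that, for any starting pair $(n_0', s_0')$ and any $n \geq n_0'$,
\[
\PR^{(n_0', s_0')}(A) = \sum_k p((n_0', s_0'),(n,k))\, h_n(k).
\]
Comparing this identity for the starting points $(0,0)$ and $(n_0, s_0)$ and using $|h_n| \leq 1$ gives $|\PR^{(0,0)}(A) - \PR^{(n_0, s_0)}(A)| \leq \sum_k |p((0,0),(n,k)) - p((n_0,s_0),(n,k))|$, whose right-hand side tends to $0$. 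Since the left-hand side does not depend on $n$, I conclude $\PR^{(0,0)}(A \mid Z_{n_0} = s_0) = \PR^{(0,0)}(A)$ for every admissible $s_0$, so $A$ is independent of $\F_{n_0}$ for every $n_0$; a standard $\pi$-$\lambda$ argument then forces $A$ to be independent of itself, whence $\PR(A) \in \{0,1\}$.

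For the converse, set $d_n := \tfrac12 \sum_j |p((0,0),(n,j)) - p((n_0, s_0),(n,j))|$. Because Markov kernels are contractions in total variation, the sequence $d_n$ is non-increasing and admits a limit $d_\infty \geq 0$; the goal is $d_\infty = 0$. My plan is to apply L\'evy's downward theorem to the decreasing family $\mathcal{G}_m := \sigma(Z_m, Z_{m+1}, \ldots)$, whose intersection is $\mathcal{H}$: triviality of $\mathcal{H}$ yields $\PR^{(0,0)}(Z_{n_0} = s_0 \mid \mathcal{G}_m) \to \PR^{(0,0)}(Z_{n_0} = s_0)$ almost surely. Using the Markov property to collapse the conditioning from $\mathcal{G}_m$ to $\sigma(Z_m)$, and then Bayes' rule, this rewrites as $\rho_m(Z_m) \to 1$ almost surely under $\PR^{(0,0)}$, where $\rho_m(k) := p((n_0, s_0),(m,k)) / p((0,0),(m,k))$. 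A Scheff\'e-type argument ($\rho_m(Z_m) \geq 0$ and $\E^{(0,0)}[\rho_m(Z_m)] = 1$) upgrades this to $L^1$ convergence, which is precisely $d_m \to 0$.

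The main obstacle I anticipate is the final Scheff\'e step: one has to rule out that mass under $\PR^{(n_0, s_0)}$ at time $m$ sits in states with $p((0,0),(m,\cdot))=0$, for otherwise the identity $\E^{(0,0)}[\rho_m(Z_m)] = 1$ fails. In the present setting this is automatic, since $Z_n$ is a partial sum of nonnegative integers and Assumptions \ref{Ass:irreducible} and \ref{Ass:2} force $a_k > 0$ for every $k \in W$, so $\PR^{(0,0)}(Z_m = k) > 0$ for every reachable $k$. For a fully general inhomogeneous Markov chain one instead phrases the argument via maximal couplings (as in Thorisson) or via weak ergodicity (as in Iosifescu), which is the route taken by the cited references.
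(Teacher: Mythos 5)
The paper does not actually prove this statement: it is imported verbatim as Theorem 4 of Iosifescu, with Thorisson and Kallenberg offered as alternative sources, and the surrounding text only ever uses the ``if'' direction (a successful coupling gives the total-variation limit, hence $\mathcal{H}$ is trivial). So any self-contained proof is by construction a different route from the paper's. Your sketch is the standard argument, and its first half is sound: for $A\in\mathcal{H}$ the Markov property yields $\PR^{(n_0',s_0')}(A)=\sum_k p((n_0',s_0'),(n,k))\,h_n(k)$ with $0\le h_n\le 1$ not depending on the starting pair, the hypothesis forces $\PR^{(n_0,s_0)}(A)=\PR^{(0,0)}(A)$, hence $A$ is independent of every $\F_{n_0}$ and therefore of itself. (One harmless bookkeeping point: under $\PR$ the chain starts in the random state $Z_0=\eta_0$ rather than at $(0,0)$, so you should either prepend a dummy step or average the estimate over $\eta_0$.) Since this is exactly the direction the paper invokes, your proposal already covers everything the paper needs.

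In the converse direction, the obstacle you single out is not actually the problem: since $\rho_m(Z_m)\to 1$ almost surely under $\PR^{(0,0)}$ and $\E^{(0,0)}[\rho_m(Z_m)]\le 1$ always, Fatou gives $\E^{(0,0)}[\rho_m(Z_m)]\to 1$, and Scheff\'e then controls both the overlap term and the mass that $\PR^{(n_0,s_0)}$ places on states not charged by $\PR^{(0,0)}$ at time $m$; no appeal to $a_k>0$ is required. The genuine residual gap sits one step earlier: the theorem quantifies over \emph{all} $s_0\in\N$, and when $\PR^{(0,0)}(Z_{n_0}=s_0)=0$ (for instance $W=\{0,\dots,N\}$ and $s_0>n_0N$, or an initial stretch of sites with $\lambda_i=0$) the event you condition on in the downward martingale is null, Bayes' rule degenerates, and your argument says nothing about $p((n_0,s_0),\cdot)$. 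Closing that case needs an additional comparison (against a reachable $(n_1,s_0)$ with $n_1>n_0$, or the maximal-coupling formulation in Thorisson), which is presumably why the paper defers to the references. For the purposes of this paper the gap is immaterial, as only the implication you prove completely is ever used.
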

Note that if we can prove for all $n_0, s_0 \in \N$ that
\begin{equation*}
\lim_{n \rightarrow \infty} \sum_{s} \PR[Z_{n_0} = s] \sum_{j} \left| p((n_0,s_0),(n,j)) - p((n_0,s),(n,j)) \right| = 0
\end{equation*}
then we satisfy the condition of the theorem. Furthermore note that for a fixed $s$ the sum over $j$ is two times the total variation distance between two chains starting at time $n_0$ in the point $s_0$ and in the point $s$. So if we can show that the total variation distance converges to $0$ for any point $s$ then by the dominated convergence theorem also the sum over $s$ converges to zero.

\begin{Cor} 
If we have a successful coupling of two chains starting at time $n_0$ at $s$ and at $s_0$, for every $n_0, s, s_0$, then $\mathcal{H}$ is trivial.
\end{Cor}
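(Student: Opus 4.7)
The corollary is essentially an application of the standard coupling inequality combined with the criterion already given in Theorem \ref{The:basiccoupling}. The plan is as follows.

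First I would recall the coupling inequality: if $(X_n,Y_n)_{n \geq n_0}$ is any coupling of two versions of the partial sum chain, started at $s_0$ and $s$ respectively at time $n_0$, then for every $n \geq n_0$
\begin{equation*}
\frac{1}{2}\sum_j \bigl| p((n_0,s_0),(n,j)) - p((n_0,s),(n,j)) \bigr| \;\leq\; \PR\bigl[X_n \neq Y_n\bigr].
\end{equation*}
If the coupling is successful, then by definition $\PR[X_n \neq Y_n]\to 0$ as $n\to\infty$, so the total variation distance between the two chains at time $n$ tends to $0$ for every fixed pair $(s_0,s)$.

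Next I would insert this into the sufficient condition observed right before the corollary statement. Namely, to verify the hypothesis of Theorem \ref{The:basiccoupling}, it suffices to show
\begin{equation*}
\lim_{n\to\infty} \sum_s \PR[Z_{n_0}=s]\sum_j \bigl| p((n_0,s_0),(n,j)) - p((n_0,s),(n,j)) \bigr| = 0.
\end{equation*}
For each fixed $s$ the inner sum is bounded by $2$ and, by the successful coupling assumption applied to the pair $(s_0,s)$, tends to $0$ as $n\to\infty$. Since $\sum_s \PR[Z_{n_0}=s]=1$, dominated convergence gives the desired limit.

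Combining the two steps, the hypothesis of Theorem \ref{The:basiccoupling} is verified, whence $\mathcal{H}$ is trivial. There is no real obstacle here: the only thing one needs to be careful about is that the successful coupling exists for every triple $(n_0,s,s_0)$, so that the dominated convergence step is justified uniformly in the conditioning variable $s$ and not just for a single pair of starting points. The genuine work is postponed to the construction of such a coupling, which is what the subsequent sections will do.
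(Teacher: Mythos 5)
Your argument is correct and follows essentially the same route as the paper: the coupling event inequality bounds the total variation distance by $\PR[T>n-n_0]$, which tends to zero for a successful coupling, and dominated convergence over the starting point $s$ (with the inner sum bounded by $2$) then verifies the hypothesis of Theorem \ref{The:basiccoupling} via the reduction stated just before the corollary. Nothing further is needed.
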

\begin{proof}
Let $T$ be the coupling time. A successful coupling means that $\PR[T < \infty] = 1$. Let $Y$ be the chain started at time $n_0$ at $s_0$ and let $\hat{Y}$ be the chain that starts at time $n_0$ at site $s$.
\begin{equation*}
2 \vn{Y_n - \hat{Y}_n}_{TV} = \sum_{j} \left| p((n_0,s_0),(n,j)) - p((n_0,s),(n,j)) \right|
\end{equation*}
By the coupling event inequality which can be found as equation (2.10) in Lindvall \cite{LindvallCoupling} we know that
\begin{equation*}
\vn{Y_n - \hat{Y}_n}_{TV} \leq 2 \PR[T > n - n_0]
\end{equation*}
By letting $n$ go to infinity we obtain the result.
\end{proof}

We need to construct a successful coupling of two chains starting at time $n_0$ at positions $s_0$ and $s$. We do this by the so called Mineka coupling, which is described in Lindvall \cite{LindvallCoupling}. Let $\alpha^i_k = 1/2 \; (\mu_{\lambda_i}(k) \wedge \mu_{\lambda_i}(k+1))$. Let $R_i$ and $\hat{R}_i$ be the step sizes of the coupled random walks, their probabilities are given by:
\begin{align*}
\PR[(R_i,\hat{R}_i) = (k-1,k)] & = \alpha^i_{k-1} \\
\PR[(R_i,\hat{R}_i) = (k,k-1)] & = \alpha^i_{k-1} \\
\PR[(R_i,\hat{R}_i) = (k,k)] & = \mu_{\lambda_i}(k) - \alpha^i_k - \alpha^i_{k-1}
\end{align*}  
Now let $Y_n = s_0 + \sum_{i=n_0 + 1}^n R_i$ and $\hat{Y}_n = s + \sum_{i= n_0 + 1}^n \hat{R}_i$ for $n  \geq n_0$ as long as $Y_n \neq \hat{Y}_n$. From the first moment that $Y_n = \hat{Y}_n$ we let $Y$ and $\hat{Y}$ make the same steps of which the distribution is given by the measure $\mu_\lambda$. It is easy to check that the distributions of $Y$ and $\hat{Y}$ are correct. If we look at the difference of these two chains: $V_n = Y_n - \hat{Y}_n$ we see that $V_n$ starts in $Y_{n_0} - \hat{Y}_{n_0} = s_0 - s$ and $(V_n)_n$ makes steps of size one and of size zero only. Furthermore the expectation of steps is zero. 
As long as $V_n$ has not hit zero yet, then it will make a random walk given by the transition probabilities for $R$ and $\hat{R}$. It is a well known fact that if such a random walk makes an infinite number of non-zero steps, then it is recurrent, see e.g. \cite{Spitzer}. So if we can prove that the chain induced by $R$ and $\hat{R}$ makes an infinite number of steps, then $V$ will hit zero eventually and thus the coupling is successful. 

\begin{Lem} \label{lem:demandcoupling}
The coupling is successful if:
\begin{equation*}
\sum_i \sum_k \PR[\eta_i = k] \wedge \PR[\eta_i = k+1] = \infty
\end{equation*}
\end{Lem}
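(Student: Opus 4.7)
The plan is to analyse the difference process $V_n = Y_n - \hat{Y}_n$. By construction each joint step $(R_i, \hat{R}_i)$ contributes $-1$, $0$, or $+1$ to $V$, and the probabilities of $+1$ and $-1$ are both equal to $\sum_k \alpha_k^i$. Hence the probability that $V$ makes a non-zero step at time $i$ is
\begin{equation*}
p_i \;=\; 2\sum_k \alpha_k^i \;=\; \sum_k \bigl(\mu_{\lambda_i}(k) \wedge \mu_{\lambda_i}(k+1)\bigr) \;=\; \sum_k \PR[\eta_i = k] \wedge \PR[\eta_i = k+1],
\end{equation*}
so the standing hypothesis $\sum_i \sum_k \PR[\eta_i = k]\wedge \PR[\eta_i=k+1] = \infty$ is exactly $\sum_i p_i = \infty$.

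Next I would invoke independence of the coupled increments across different times $i$ together with the second Borel--Cantelli lemma to conclude that, almost surely, $V$ performs infinitely many non-zero steps before any fixed time horizon is exhausted. Writing $N(n)$ for the number of non-zero steps of $V$ up to time $n$, we therefore have $N(n)\to\infty$ a.s.

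The key structural observation is now that, conditional on the step being non-zero, the step is $+1$ or $-1$ with probability $\tfrac12$ each, and these conditional signs are mutually independent of one another and of the indicator sequence telling us which steps are non-zero. Consequently, on the time-changed scale $N$, the process $V$ coincides with $s_0 - s$ plus a simple symmetric random walk $W$ on $\mathbb{Z}$: one can write $V_n = (s_0 - s) + W_{N(n)}$ up until the coupling moment, independently of $N$.

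Finally, since simple symmetric random walk on $\mathbb{Z}$ is recurrent (see e.g.\ \cite{Spitzer}), $W_k$ hits $-(s_0 - s)$ in finite time a.s., and because $N(n)\to\infty$ a.s., this hitting time of $W$ corresponds to a finite time in the original scale at which $V_n = 0$. At that instant the coupling succeeds, so $\PR[T < \infty] = 1$. The only subtle point is to make the decomposition $V_n = (s_0-s) + W_{N(n)}$ precise, i.e.\ to justify that the signs of the non-zero steps form a genuine i.i.d.\ fair Bernoulli sequence independent of the non-zero/zero pattern; this follows directly from the explicit joint law of $(R_i, \hat{R}_i)$, so I expect no real obstacle.
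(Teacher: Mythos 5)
Your argument is correct and is essentially the paper's own proof: the paper likewise notes that the probability of a non-zero step of $V$ at time $i$ is $2\sum_k\alpha^i_k$, applies the second Borel--Cantelli lemma to get infinitely many non-zero steps almost surely, and then invokes recurrence of the embedded simple symmetric random walk to conclude that $V$ hits zero. Your write-up merely makes explicit the time-change decomposition $V_n=(s_0-s)+W_{N(n)}$ that the paper leaves as a "well known fact."
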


\begin{proof}
The random walk given by $R$ and $\hat{R}$ makes an infinite number of non-zero steps if $\sum_k \sum_i 2 \alpha_k^i = \infty$ by Borel-Cantelli. But this is exactly the claim.
\end{proof}

We give a small summary in the form of the next corollary.

\begin{Cor} \label{Cor:basictrivialitydemand}
$\mathcal{A}$ is trivial under the product measure $\PR$ if
\begin{equation} \label{eqn:basictrivialitydemand}
\sum_i \sum_k \PR[\eta_i = k] \wedge \PR[\eta_i = k+1] = \infty 
\end{equation}
\end{Cor}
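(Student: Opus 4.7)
The proof is essentially a bookkeeping assembly of the three preceding results, so the plan is to chain them together cleanly. First I would invoke Lemma \ref{Lem:AH} to replace $\mathcal{A}$ by $\mathcal{H}$, reducing the statement to triviality of $\mathcal{H}$ under $\PR$. Then I would apply the corollary stated just after Theorem \ref{The:basiccoupling}: it suffices to produce, for every choice of starting time $n_0$ and starting levels $s_0, s \in \N$, a successful coupling of the partial-sum chains $(Z_n)_{n \geq n_0}$ started from $(n_0, s_0)$ and $(n_0, s)$.

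For the coupling I would use the Mineka coupling already described in the excerpt. The construction gives coupled increments $(R_i, \hat{R}_i)$ with marginals $\mu_{\lambda_i}$ and with $(R_i,\hat R_i)$ differing by at most $1$ in absolute value; the difference process $V_n = Y_n - \hat{Y}_n$ is then a mean-zero random walk with steps in $\{-1,0,+1\}$, run until it first hits $0$, after which the chains coalesce. By the classical recurrence of mean-zero $\pm 1$ random walks on $\Z$ (see e.g.\ Spitzer), $V$ hits $0$ almost surely provided it takes infinitely many nonzero steps. The probability that the $i$-th joint step is nonzero is exactly $\sum_k 2\alpha_k^i = \sum_k \bigl(\PR[\eta_i=k] \wedge \PR[\eta_i=k+1]\bigr)$, and a direct Borel--Cantelli argument on the independent events $\{(R_i,\hat R_i)\neq (k,k)\}$ shows that an infinite number of nonzero steps occurs almost surely precisely when
\begin{equation*}
\sum_{i > n_0} \sum_k \bigl(\PR[\eta_i = k] \wedge \PR[\eta_i = k+1]\bigr) = \infty.
\end{equation*}
This is the content of Lemma \ref{lem:demandcoupling}; since hypothesis (\ref{eqn:basictrivialitydemand}) is a tail condition on $i$, removing the finitely many initial terms with $i \leq n_0$ preserves divergence, so the criterion holds uniformly in $n_0$ and the coupling is successful for every starting triple $(n_0, s_0, s)$.

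Assembling these three pieces---Lemma \ref{lem:demandcoupling} for successful coupling under (\ref{eqn:basictrivialitydemand}), the corollary to Theorem \ref{The:basiccoupling} for triviality of $\mathcal{H}$, and Lemma \ref{Lem:AH} for $\mathcal{A} = \mathcal{H}$---yields the claim. I do not anticipate any genuine obstacle: all the analytic work was carried out in the setup before Lemma \ref{lem:demandcoupling}, and the corollary itself is a convenient packaging for the hypothesis (\ref{eqn:basictrivialitydemand}) that will be verified in the proofs of Theorems \ref{The:equivA} and \ref{The:equivA2}.
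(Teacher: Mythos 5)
Your proposal is correct and matches the paper's intent exactly: the corollary is stated there as a summary of the chain Lemma \ref{Lem:AH}, the corollary to Theorem \ref{The:basiccoupling}, the Mineka coupling construction, and Lemma \ref{lem:demandcoupling}, which is precisely the assembly you describe. Your added remark that divergence of the sum is a tail condition in $i$ (so the criterion holds for every starting time $n_0$) is a small but worthwhile explicit step that the paper leaves implicit.
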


The result can be generalised, for this we need Bezout's identity. The proof of this lemma is elementary, see e.g. \cite{JonesJonesNumberTheory,UspenskyHeasletElNumberTheory}.

\begin{Lem} \label{lem:Bezout}
For a finite set $\mathfrak{D} = \{d_1, \dots, d_p\}$ of positive integers there are $x_1 , \dots, x_p \in \Z$ so that:
\begin{equation*}
\gcd(\mathfrak{D}) = x_1 d_1 + \dots + x_p d_p
\end{equation*}
Furthermore $\gcd(\mathfrak{D})$ is the smallest positive integer for which this is possible.
\end{Lem}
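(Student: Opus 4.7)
The plan is to prove Bezout's identity by induction on $p$, reducing to the two-variable case that one proves via the well-ordering principle.

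First I would handle the base case $p = 2$ by a standard argument. Consider the set $S = \{x d_1 + y d_2 : x, y \in \Z, \; x d_1 + y d_2 > 0\}$. This set is nonempty (it contains $d_1$), so by the well-ordering of $\N$ it has a least element $d$. I would then show $d \mid d_1$: apply the division algorithm to write $d_1 = q d + r$ with $0 \le r < d$; since $r = d_1 - q d$ is itself an integer combination of $d_1, d_2$, if $r > 0$ it would lie in $S$ and be strictly smaller than $d$, contradicting minimality, so $r = 0$. The same argument shows $d \mid d_2$, so $d$ is a common divisor. Conversely, any common divisor of $d_1, d_2$ divides every element of $S$, in particular $d$. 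Hence $d = \gcd(d_1, d_2)$, and by construction $d$ is expressible as $x_1 d_1 + x_2 d_2$.

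Next I would do the induction step. For $p \ge 3$, assume the claim for $p-1$, so $\gcd(d_1, \dots, d_{p-1}) = \sum_{i=1}^{p-1} y_i d_i$ for some $y_i \in \Z$. Using the standard identity $\gcd(d_1, \dots, d_p) = \gcd(\gcd(d_1, \dots, d_{p-1}), d_p)$ (which is immediate from the definition of gcd via common divisors), I apply the two-variable case to the pair $(\gcd(d_1, \dots, d_{p-1}), d_p)$: there are integers $\alpha, \beta$ with
\begin{equation*}
\gcd(d_1, \dots, d_p) = \alpha \gcd(d_1, \dots, d_{p-1}) + \beta d_p = \sum_{i=1}^{p-1} (\alpha y_i) d_i + \beta d_p,
\end{equation*}
which is the desired representation with $x_i = \alpha y_i$ for $i < p$ and $x_p = \beta$.

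Finally, for the minimality statement: let $m$ be any positive integer of the form $\sum_i x_i d_i$ with $x_i \in \Z$. Since $\gcd(\mathfrak{D})$ divides each $d_i$, it divides $m$; hence $m \ge \gcd(\mathfrak{D})$. Combined with the fact, established above, that $\gcd(\mathfrak{D})$ itself is representable in this form, $\gcd(\mathfrak{D})$ is the smallest such positive integer. There is no real obstacle here; the only care needed is to be explicit that $\gcd(\mathfrak{D})$ divides every integer combination of the $d_i$, which is what forces the minimality bound.
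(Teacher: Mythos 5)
Your proof is correct. The paper does not actually supply an argument for this lemma --- it simply cites standard number theory references --- and what you have written is precisely the standard proof found there: the two-variable case via well-ordering and the division algorithm, the extension to $p$ integers by induction using $\gcd(d_1,\dots,d_p)=\gcd(\gcd(d_1,\dots,d_{p-1}),d_p)$, and minimality from the fact that $\gcd(\mathfrak{D})$ divides every integer combination of the $d_i$. All steps are sound and complete.
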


We use it to prove the following theorem.

\begin{The} \label{The:trivialitydemand}
Suppose that there exists a finite set $\mathfrak{D} = \{d_1, \dots, d_p\}$ for some $p$ such that
\begin{equation*}
\mathfrak{D} \subset \left\{ d > 0 \; : \; \sum_i \sum_k \PR[\eta_i = k+d] \wedge \PR[\eta_i = k] = \infty \right\}
\end{equation*}
and $\gcd(\mathfrak{D}) = 1$, then $\mathcal{A}$ is trivial under the product measure $\PR$.
\end{The}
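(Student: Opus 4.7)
The strategy is to generalise the Mineka coupling underlying Corollary \ref{Cor:basictrivialitydemand} so that the difference $V_n = Y_n - \hat{Y}_n$ of the two coupled chains can perform independent symmetric jumps of every magnitude $d \in \mathfrak{D}$, not only magnitude $1$. For each site $i$, each $d \in \mathfrak{D}$ and each $k$, set
$$\alpha^{i,d}_k \;=\; \frac{1}{2p}\bigl(\mu_{\lambda_i}(k) \wedge \mu_{\lambda_i}(k+d)\bigr),$$
and define the joint law of $(R_i,\hat R_i)$ by placing mass $\alpha^{i,d}_k$ on each of the pairs $(k,k+d)$ and $(k+d,k)$ for every $d \in \mathfrak{D}$ and $k$, with the diagonal mass at $(m,m)$ adjusted so that both marginals equal $\mu_{\lambda_i}$. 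Since the off-diagonal contribution to the marginal at any $m$ is at most $\sum_{d \in \mathfrak{D}} \frac{1}{p}\mu_{\lambda_i}(m) = \mu_{\lambda_i}(m)$, the diagonal mass is nonnegative and the joint law is well defined.

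Building the chains $Y_n = s_0 + \sum_{i=n_0+1}^n R_i$ and $\hat Y_n = s + \sum_{i=n_0+1}^n \hat R_i$ from these increments and merging them at their first meeting time, the difference $V_n$ is an inhomogeneous random walk on $\Z$ with independent symmetric increments $\xi_i$ supported on $\{-d,0,d : d \in \mathfrak{D}\}$ and with $\PR[|\xi_i| = d] = \frac{1}{p}\sum_k \mu_{\lambda_i}(k) \wedge \mu_{\lambda_i}(k+d)$. The hypothesis of the theorem and independence across $i$ then give, via the second Borel-Cantelli lemma, that for every $d \in \mathfrak{D}$ almost surely infinitely many $\pm d$-jumps occur.

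It remains to show that $V$ visits the origin almost surely starting from $s_0 - s$, since then the coupling is successful and triviality of $\mathcal{A}$ follows exactly as in the proof of Corollary \ref{Cor:basictrivialitydemand} via Theorem \ref{The:basiccoupling}. The assumption $\gcd(\mathfrak{D})=1$, rephrased through Lemma \ref{lem:Bezout} as a representation $1 = \sum_j c_j d_j$ with $c_j \in \Z$, is exactly what prevents $V$ from being confined to a proper sublattice of $\Z$. Combined with the facts that the increments are bounded, symmetric, independent and that the accumulated variance diverges, this algebraic information can be upgraded to recurrence, for example by grouping the non-zero increments into super-blocks chosen so that each super-block has a uniformly positive probability of producing net displacement $\pm 1$ along a prescribed Bezout pattern; the $\mathfrak{D} = \{1\}$ recurrence argument that proves Corollary \ref{Cor:basictrivialitydemand} then applies at super-block boundaries.

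The main obstacle is precisely this last recurrence step. The super-block argument is delicate because the per-site distributions of $|\xi_i|$ are inhomogeneous and no single magnitude is guaranteed to occur with positive asymptotic frequency; one has to choose the super-blocks (possibly of random lengths, waiting until enough jumps of each $d \in \mathfrak{D}$ have occurred) so that each realises a fixed Bezout combination with uniformly positive probability. A cleaner analytic alternative is a direct Fourier estimate on $\prod_i \phi_i(t)$: the bound $|\phi_i(t)| \leq 1 - c \sum_{d \in \mathfrak{D}} \PR[|\xi_i|=d]\bigl(1-\cos(dt)\bigr)$, the divergence $\sum_i \PR[|\xi_i|=d] = \infty$ for each $d \in \mathfrak{D}$ and the Bezout identity together force $\prod_i \phi_i(t) \to 0$ uniformly outside every neighbourhood of $t=0$, which is the classical Fourier criterion for recurrence of a symmetric $\Z$-valued random walk and suffices to conclude the proof.
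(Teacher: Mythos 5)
Your coupling construction is fine: the mixed Mineka kernel with weights $\alpha^{i,d}_k = \tfrac{1}{2p}\bigl(\mu_{\lambda_i}(k)\wedge\mu_{\lambda_i}(k+d)\bigr)$ has the correct marginals, and Borel--Cantelli does give infinitely many $\pm d$-jumps of the difference walk $V$ for each $d\in\mathfrak{D}$. But the proof then stops exactly where the real work begins. You acknowledge that the recurrence of $V$ is ``the main obstacle'' and offer two sketches, neither of which closes it. The super-block route is only described, not carried out, and it is genuinely nontrivial here because the increment laws are inhomogeneous and no single $d$ need occur with positive frequency along any deterministic block structure. The Fourier route rests on a misstatement: $\prod_i\phi_i(t)\to 0$ for $t\notin 2\pi\Z$ (which does follow from $\gcd(\mathfrak{D})=1$ and the divergence hypothesis) is a criterion for the law of $V_n$ to spread out, i.e.\ for tail triviality of the partial sums of the increments; it is \emph{not} a recurrence criterion. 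For homogeneous walks recurrence is governed by the Chung--Fuchs integral test $\int \mathrm{d}t/(1-\phi(t))=\infty$, a different and stronger condition, and for time-inhomogeneous walks there is no off-the-shelf Fourier criterion for hitting a prescribed point. Since your argument feeds into Theorem \ref{The:basiccoupling} through a \emph{successful} coupling (finite meeting time), you must actually prove that $V$ hits $0$ almost surely, and as written you have not.

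The paper sidesteps this difficulty entirely by running the $p$ Mineka couplings \emph{sequentially} rather than simultaneously. Writing $1=x_1d_1+\dots+x_pd_p$ by Lemma \ref{lem:Bezout}, one first uses only the $d_1$-coupling; the difference walk then lives on the coset $V_0+d_1\Z$, makes infinitely many $\pm d_1$ steps, and by the same one-dimensional recurrence fact already used for Corollary \ref{Cor:basictrivialitydemand} it almost surely reaches the intermediate target $V_0x_2d_2+\dots+V_0x_pd_p$ (which differs from $V_0$ by the multiple $V_0x_1d_1$ of $d_1$). One then switches to the $d_2$-coupling, and so on; each stage consumes only finitely many sites, so the divergence hypothesis survives to the next stage, and after $p$ stages the difference is $0$. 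If you want to keep your simultaneous coupling, you would need to supply an actual recurrence proof for bounded symmetric inhomogeneous walks with aperiodic support; the sequential reduction is the cheaper fix.
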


This result is analogous to theorem 1.8 in Aldous and Pitman \cite{ArtAldousPitman} and is proven by similar methods.

\begin{proof}
We need to couple two walks $Y$ and $\tilde{Y}$ that start at time $n_0$ in $s_0$ and $s$. Let again $V = Y - \tilde{Y}$ and denote $V_{n_0} = s_0 - s $ the difference. By Bezout's identity, lemma \ref{lem:Bezout}, and the fact that $\gcd(\mathfrak{D}) = 1$ we can write $1 = x_1 v_1 + \dots + x_p v_p$, hence $V_0 = V_0 x_1 d_1 + \dots + V_0 x_p d_p$.

\bigskip

The idea is that we use $p$ different versions of the Mineka coupling, one for each integer in $\mathfrak{D}$. First we use a coupling so that the difference of the walks makes step sizes $0, -d_1, d_1$. At a suitable time we switch to a coupling so that the difference makes steps of sizes $0,-d_2, d_2$ and so on. Define for $d \in \mathfrak{D}$
\begin{equation*}
\alpha_k^i(d) = \frac{1}{2} (\mu_{\lambda_i}(k) \wedge \mu_{\lambda_i}(k+d)).
\end{equation*}

We start the coupling just as in the case where $1 \in \mathfrak{D}$.
\begin{align*}
\PR[(R_i,\hat{R}_i) = (k-d_1,k)] & = \alpha^i_{k-1}(d_1) \\
\PR[(R_i,\hat{R}_i) = (k,k-d_1)] & = \alpha^i_{k-1}(d_1) \\
\PR[(R_i,\hat{R}_i) = (k,k)] & = p^i_k - \alpha^i_k(d_1) - \alpha^i_{k-1}(d_1)
\end{align*}  
Let $T_1 = \inf \{n \geq 0 \; : \; V_n = V_0 x_2 d_2 + \dots + V_0 x_p d_p  \}$. $T_1$ is almost surely finite because $d_1 \in \mathfrak{D}$ and the argument preceding lemma \ref{lem:demandcoupling}. Now let $Y_n = s_0 + \sum_{i= n_0 + 1}^n R_i$ and $\hat{Y}_n = s + \sum_{i=n_0 +  1}^n \hat{R}_i$ until $T_1$. 
From time $T_1$ onwards we let the walks evolve with steps of size $d_2$, so: 
\begin{align*}
\PR[(R_i,\hat{R}_i) = (k-d_2,k)] & = \alpha^i_{k-1}(d_2) \\
\PR[(R_i,\hat{R}_i) = (k,k-d_2)] & = \alpha^i_{k-1}(d_2) \\
\PR[(R_i,\hat{R}_i) = (k,k)] & = p^i_k - \alpha^i_k(d_2) - \alpha^i_{k-1}(d_2)
\end{align*}  
Conditional on its position at time $T_1$ we define $T_2 = \inf \{n \geq T_1 \; : \; V_n = V_0 x_3 d_3 + \dots + V_0 x_p d_p  \}$. Because $T_1$ is almost surely finite and the fact that $d_2 \in \mathfrak{D}$ it holds that $T_2$ is also almost surely finite.

\bigskip

We repeat the last step for $3$ to $p$ and obtain that $T_p = \inf \{n \geq T_{p-1} \; : \; V_n = 0  \}$ is almost surely finite. 
\end{proof}

This machinery is enough to start prove implication (3) to (1) in theorems \ref{The:equivA} and \ref{The:equivA2}. We start with theorem \ref{The:equivA}.

\begin{proof}[Proof of (3) to (1) of theorem \ref{The:equivA}]
Recall (3) of the theorem:
\begin{equation*}
\sum_{i: \lambda_i \geq 1} \frac{1}{\lambda_i} + \sum_{i : \lambda_i < 1} \lambda_i = \infty
\end{equation*}
First suppose that $\sum_{i : \lambda_i < 1} \lambda_i = \infty$. We check the condition in corollary \ref{Cor:basictrivialitydemand}.
\begin{align*}
& \sum_i \sum_k \PR[\eta_i = k] \wedge \PR[\eta_i = k+1] \\
& \quad \geq \sum_{i : \lambda_i < 1} \PR[\eta_i = 0] \frac{a_1}{a_0} \lambda_i \\
& \quad \geq \frac{1}{Z_1} \sum_{i : \lambda_i < 1} \lambda_i \\
& \quad = \infty
\end{align*}
Now suppose that $\sum_{i: \lambda_i \geq 1} \frac{1}{\lambda_i} = \infty$. We split this into two cases, let $B = \frac{a_{N-1}}{a_N} \vee 1$. 

First suppose that $\frac{a_{N-1}}{a_N} \geq 1$. If it is the case that 
\begin{equation} \label{eqn:inftysumB}
\sum_{i : 1 \leq \lambda_i \leq \frac{a_{N-1}}{a_N}} \frac{1}{\lambda_i} = \infty
\end{equation} 
then there are infinitely many $i$ such that $1 \leq \lambda_i < \frac{a_{N-1}}{a_N}$. For a given $k$ the set of probabilities
\begin{equation*}
\left\{ \PR[\eta_i = k] \; : \; i \text{ such that } 1 \leq \lambda_i < \frac{a_{N-1}}{a_N} \right\}
\end{equation*}
is bounded from below by some constant, hence $\sum_i \sum_k \PR[\eta_i = k] \wedge \PR[\eta_i = k+1] = \infty$.
The other case is that the sum for elements $i$ so that $\lambda_i > B$ is infinite:
\begin{equation*}
\sum_{i : \lambda_i \geq B} \frac{1}{\lambda_i} = \infty.
\end{equation*} 
For $\lambda_i > B$ a small calculation shows that $\PR[\eta_i = N]$ is bounded from below by $C = \frac{a_N B^N}{Z_B}$.
\begin{align*}
& \sum_i \sum_k \PR[\eta_i = k] \wedge \PR[\eta_i = k+1] \\
& \quad \geq \sum_{i : \lambda_i \geq B} \PR[\eta_i = N] \left(1 \wedge \frac{a_{N-1}}{a_N} \frac{1}{\lambda_i} \right) \\
& \quad = \sum_{i : \lambda_i \geq B} \PR[\eta_i = N] \frac{a_{N-1}}{a_N} \frac{1}{\lambda_i} \\
& \quad \geq C \frac{a_{N-1}}{a_N} \sum_{i : \lambda_i \geq B} \frac{1}{\lambda_i} \\
& \quad = \infty
\end{align*}
This proves theorem \ref{The:equivA} (3) to (1).
\end{proof}

Now we prove the second theorem.

\begin{proof}[Proof of (3) to (1) of theorem \ref{The:equivA2}]

For the proof of (1) given (3) we need $(\ast \ast)$, so either $\lambda^* < \infty$ or that $\lambda^* = \infty$ and there exists a finite set $\mathfrak{D} = \{d_1, \dots, d_p\}$ such that $\gcd(\mathfrak{D}) = 1$ and
\begin{equation*}
\mathfrak{D} \subset \left\{d \geq 1 \; : \; \sup_{k} \frac{a_k^2}{a_{k-d}a_{k+d}} = \sup_k \prod_{i=0}^{d-1}\frac{b(k+i+1,k-i-1)}{b(k-i,k+i)}< \infty \right\}
\end{equation*}

We take two approaches to show that this implies that $\mathcal{A}$ is trivial. The first approach resembles the proof of (3) to (1) of theorem \ref{The:equivA} above. Under the assumption that $\lambda^* < \infty$ this method will show that $\sum_i \lambda_i = \infty$ implies that $\mathcal{A}$ is trivial under $\PR = \mu_\lambda$. The second and more difficult approach will be used for the case that $\lambda^*$ is infinite.

\bigskip

We start the first approach by analysing the minimum of the two probabilities $\PR[\eta_i = k] \wedge \PR[\eta_i = k+1]$. Recall the definition of $I$ from assumption \ref{Ass:2}, then we see that
\begin{equation*}
\PR[\eta_i = k] \geq \frac{I}{\lambda_i} \PR[\eta_i = k+1].
\end{equation*}
Therefore:
\begin{align*}
\sum_i \sum_k \PR[\eta_i = k] \wedge \PR[\eta_i = k+1] & \geq \sum_i \sum_k \PR[\eta_i = k+1] \left(\frac{I}{\lambda_i} \wedge 1 \right) \\
& = \sum_i \left(\frac{I}{\lambda_i} \wedge 1 \right) \PR[\eta_i >0] \\
& = \sum_{i : \lambda_i < I} \PR[\eta_i >0] + I \sum_{i : \lambda_i \geq I} \frac{1}{\lambda_i} \PR[\eta_i >0]
\end{align*}
We know from assumption (2) that $\sum_i \PR[\eta_i > 0] = \infty$. Suppose now that the sum $\sum_{\lambda_i < I} \PR[\eta_i > 0] = \infty$, then we are done. On the other hand it could be that $\sum_{\lambda_i \geq I} \PR[\eta_i > 0] = \infty$.
Note the following for $\lambda_i \geq I$:
\begin{align*}
\PR[\eta_i >0 ] & = 1 - \PR[\eta_i = 0] \\
& = 1- \frac{1}{\lambda_i} \PR[\eta_i = 1] \\
& \geq 1- \frac{1}{\lambda_i} \PR[\eta_i >0]
\end{align*}
Hence
\begin{equation*}
\PR[\eta_i > 0] \geq \frac{\lambda_i}{1+ \lambda_i}
\end{equation*}
and note that $\frac{I}{1+I} \leq \frac{\lambda_i}{1+ \lambda_i} \leq 1$. This leads to the observation that
\begin{equation*}
\sum_{i : \lambda_i \geq I} \frac{1}{\lambda_i} \PR[\eta_i >0] \geq \frac{I}{1+I}\sum_{i : \lambda_i \geq I} \frac{1}{\lambda_i}
\end{equation*}
Because we know from the fact $\sum_{\lambda_i \geq I} \PR[\eta_i > 0] = \infty$ that there are infinitely many terms so that $\lambda_i \geq I$ we obtain that if $\sum_{i : \lambda_i \geq I} \frac{1}{\lambda_i} = \infty $
then
\begin{equation*}
\sum_i \sum_k \PR[\eta_i = k] \wedge \PR[\eta_i = k+1] = \infty.
\end{equation*}
Note that if $\lambda^* = \infty$ this is tricky to check. But suppose now that $\lambda^* < \infty$ then this is satisfied automatically. This is the first assumption of $(\ast \ast)$.
This however also proves the following:
If
\begin{equation} \label{eqn:Atrivialextraresult}
\sum_{i : \lambda_i < I } \lambda_i + \sum_{ i : \lambda_i \geq I} \frac{1}{\lambda_i} = \infty
\end{equation}
then $\mathcal{A}$ is trivial. We state this as a corollary after the proof of theorem \ref{The:equivA2}.

\bigskip

We now start with the second approach for the case where $\lambda^* = \infty$, $\liminf \lambda_i = \infty$ and $\sum_{i : \lambda_i \geq I} \frac{1}{\lambda_i} < \infty $. We will use the second part of $(\ast \ast)$ for this case.

\bigskip

Start by assuming the special case that $\mathfrak{D} =\{1\}$ i.e.
\begin{equation*}
\sup_k \frac{a_k^2}{a_{k-1}a_{k+1}} < \infty
\end{equation*}
the proof in full generality is only slightly more difficult but uses similar arguments.

\bigskip

Assume $\lambda^* = \infty$ Recall the definition of $a_k$ and note that $a_0 = a_1 = 1$ and define the following two sets.

\begin{equation}
\begin{aligned}
M^*(x) & =
\begin{cases}
\;\;  \left\{k \neq 0 \; : \; \frac{a_{k-1}}{a_k} < x < \frac{a_k}{a_{k+1}} \right\} \cup \{0\} & \mbox{if } x< 1 \\
\;\;  \left\{k \neq 0 \; : \; \frac{a_{k-1}}{a_k} < x < \frac{a_k}{a_{k+1}} \right\} & \mbox{if } x \geq 1
\end{cases} \\
M_*(x) & = \left\{k \neq 0 \; : \; \frac{a_{k-1}}{a_k} > x > \frac{a_k}{a_{k+1}} \right\}
\end{aligned}
\end{equation}
The intuition behind these sets is the following. Fix $x$. Let $k > 0$, then $k \in M^*(x)$ if $\mu_x(k) > \mu_x(k-1) \vee \mu_x(k+1)$. In the case that $k=0$ it holds that $0 \in M^*(x)$ if $\mu_x(0) > \mu_x(1)$. Hence in the sum $\sum_k \mu_x(k) \wedge \mu_x(k+1)$ the probability $\mu_x(k)$ is not present. On the other hand if $k \in M_*(x)$ the probability will occur twice. This means that
\begin{equation} \label{eqn:gebruikM*}
\sum_k \mu_x(k) \wedge \mu_x(k+1) = 1 - \sum_{k \in M^*(x)} \mu_x(k) + \sum_{k \in M_*(x)} \mu_x(k)
\end{equation}

\bigskip

We work as before, we have assumed that $\sum_i \lambda_i = \infty$, suppose that $\sum_{i : \lambda_i < 1} \lambda_i = \infty$, then we obtain that $\mathcal{A}$ is trivial under $\mu_\lambda$. This is because 
\begin{align*}
\sum_i \sum_k \PR[\eta_i = k] \wedge \PR[\eta_i =k+1] & \geq \sum_{i : \lambda_i < 1} \PR[\eta_i = 1] \\
& = \sum_{i : \lambda_i < 1} \frac{\lambda_i}{Z_{\lambda_i}} \\
& > \frac{1}{Z_1} \sum_{i : \lambda_i < 1} \lambda_i \\
& = \infty.
\end{align*}

This leaves the case that $\sum_{i : \lambda_i \geq 1} \lambda_i = \infty$. This tells us nothing but the fact that there are infinitely many sites $i$ so that $\lambda_i \geq 1$. We use this to prove divergence of 
\begin{equation*}
\sum_i \sum_k \PR[\eta_i = k] \wedge \PR[\eta_i = k+1].
\end{equation*}

From this point on if we we sum over $i$ we implicitly assume that $\lambda_i \geq 1$ to make the notation easier. The sets $M^*$ and $M_*$ allow us to rewrite this sum.

\begin{align*}
& \sum_i \sum_k \PR[\eta_i = k] \wedge \PR[\eta_i = k+1] \\
& \quad = \sum_i \left[ 1 - \sum_{k \in M^*(\lambda_i)} \mu_{\lambda_i}(k) + \sum_{k \in M_*(\lambda_i)} \mu_{\lambda_i}(k) \right] \\
& \quad \geq \sum_i \left[ 1 - \sum_{k \in M^*(\lambda_i)} \mu_{\lambda_i}(k) \right]
\end{align*}
So if we can bound $\sum_{k \in M^*(\lambda_i)} \mu_{\lambda_i}(k)$ away from $1$ uniformly in $i$, then this sum will be infinite.

\bigskip

For $k \in M^*(x)$ we see that 
\begin{equation*}
\frac{1}{x} < \frac{a_k}{a_{k-1}} = \frac{a_{k+1}}{a_k} \frac{a_k^2}{a_{k-1}a_{k+1}}
\end{equation*}
which in turn implies that
\begin{align*}
\mu_x(k) & = \frac{a_k x^k}{Z_x} \\
& = \frac{a_k x^{k+1}}{Z_x} \frac{1}{x} \\
& < \mu_x(k+1)\left[\frac{a_k^2}{a_{k-1}a_{k+1}} \right].
\end{align*}
We use this to bound the following conditional probability.
\begin{align*}
\frac{\mu_x(k)}{\mu_x(k) + \mu_x(k+1)} & < \frac{\mu_x(k+1 )}{\mu_x(k) + \mu_x(k+1)} \left[\frac{a_k^2}{a_{k-1}a_{k+1}} \right] \\
& = \left(1 - \frac{\mu_x(k)}{\mu_x(k) + \mu_x(k+1)} \right) \left[\frac{a_k^2}{a_{k-1}a_{k+1}} \right]
\end{align*}
This yields for $C(k) = \left[\frac{a_k^2}{a_{k-1}a_{k+1}} \right]$ that
\begin{equation} \label{eqn:fraction}
\frac{\mu_x(k)}{\mu_x(k) + \mu_x(k+1)} < \frac{C(k)}{1+C(k)}
\end{equation}
Note that bounding the sum $\sum_{k \in M^*(\lambda_i)} \mu_{\lambda_i}(k)$ away from $1$ uniformly in $i$ is possible by bounding away fractions of the type found in equation (\ref{eqn:fraction}) from $1$, but this is equivalent to bounding away $C(k)$ from $\infty$.
Hence we obtain as a condition that
\begin{equation*}
\sup_{x, k \in M^*(x)} \frac{a_k^2}{a_{k-1}a_{k+1}} < \infty
\end{equation*}
If we fix some $x$ and some $k \notin M^*(x)$ then this fraction is bounded by $1$, so it holds uniformly if
\begin{equation*}
\sup_k \frac{a_k^2}{a_{k-1}a_{k+1}} < \infty.
\end{equation*}

If we plug in theorem \ref{The:trivialitydemand} instead of corollary \ref{Cor:basictrivialitydemand} then we can improve on the last calculation to obtain (3) to (1) of theorem \ref{The:equivA2}.

\end{proof}

In the proof of theorem \ref{The:equivA2} we saw that the condition in equation (\ref{eqn:Atrivialextraresult}) is also enough to obtain triviality of $\mathcal{A}$.
\begin{Cor}
If
\begin{equation*} 
\sum_{i : \lambda_i < I } \lambda_i + \sum_{ i : \lambda_i \geq I} \frac{1}{\lambda_i} = \infty
\end{equation*}
then $\mathcal{A}$ is trivial under $\mu_\lambda$.
\end{Cor}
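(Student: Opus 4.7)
The plan is to reduce immediately to Corollary \ref{Cor:basictrivialitydemand}, so the task is to show that the stated hypothesis forces
\begin{equation*}
\sum_i \sum_k \PR[\eta_i = k] \wedge \PR[\eta_i = k+1] = \infty.
\end{equation*}
I would start by recycling the elementary bound established at the opening of the proof of Theorem \ref{The:equivA2}: Assumption \ref{Ass:2} gives $\PR[\eta_i = k] \geq (I/\lambda_i)\,\PR[\eta_i = k+1]$ for every $k$, so summing over $k$ yields, for each $i$,
\begin{equation*}
\sum_k \PR[\eta_i = k] \wedge \PR[\eta_i = k+1] \;\geq\; \left(\frac{I}{\lambda_i} \wedge 1\right) \PR[\eta_i > 0].
\end{equation*}

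Next I would invoke the universal one-site bound $\PR[\eta_i > 0] \geq \lambda_i/(1+\lambda_i)$, also derived inside that same proof from the identity $\PR[\eta_i = 1] = \lambda_i\,\PR[\eta_i = 0]$ together with $\PR[\eta_i = 1] \leq \PR[\eta_i > 0]$. Splitting the sum over $i$ according to whether $\lambda_i < I$ or $\lambda_i \geq I$, and combining the two displayed bounds (using $\lambda_i/(1+\lambda_i) \geq \lambda_i/(1+I)$ on the first piece and $\lambda_i/(1+\lambda_i) \geq I/(1+I)$ on the second), produces
\begin{equation*}
\sum_i \sum_k \PR[\eta_i = k] \wedge \PR[\eta_i = k+1] \;\geq\; \frac{1}{1+I}\sum_{i:\lambda_i < I} \lambda_i \;+\; \frac{I^2}{1+I}\sum_{i:\lambda_i \geq I} \frac{1}{\lambda_i},
\end{equation*}
which is infinite by hypothesis, so Corollary \ref{Cor:basictrivialitydemand} finishes the proof.

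There is no genuine obstacle here: the argument is simply a repackaging of two intermediate inequalities already extracted in the proof of Theorem \ref{The:equivA2}, and, importantly, it bypasses the radius-of-convergence assumption $(\ast\ast)$ entirely — one need not worry about $Z_{\lambda_i}$ blowing up as $\lambda_i \uparrow \lambda^*$, because the lower bound $\lambda_i/(1+\lambda_i)$ handles every regime uniformly.
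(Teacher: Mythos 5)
Your proof is correct and follows essentially the same route as the paper: the paper derives this corollary as a byproduct of the first part of the proof of Theorem \ref{The:equivA2}, using exactly the two inequalities you cite, namely $\sum_k \PR[\eta_i = k] \wedge \PR[\eta_i = k+1] \geq (\tfrac{I}{\lambda_i} \wedge 1)\PR[\eta_i > 0]$ from Assumption \ref{Ass:2} and $\PR[\eta_i > 0] \geq \lambda_i/(1+\lambda_i)$, before invoking Corollary \ref{Cor:basictrivialitydemand}. Your observation that the argument needs no part of assumption $(\ast\ast)$ matches the paper's reason for singling this statement out as a separate corollary.
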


\section{A Product measure following a deterministic strictly increasing profile} \label{sec:deterministicprofile}

In the case that $W = \N$ one might think that $\sum_i \lambda_i = \infty$ is enough to prove that $\mathcal{A}$ is trivial. This is not the case as one can see in the next example. We construct a particle system and a non-ergodic stationary product measure, such that $\sum_i \lambda_i = \infty$. 

\bigskip

Construct a conservative product type particle system on $\N^{\N}$ with the following properties. First we define the nearest neighbour transition kernel $p$, which we condition to satisfy $p_{i,i+1} + p_{i,i-1} = 1$, let $p_{0,1}  = 1$ and let $p_{1,2} \in (0,1)$. First let $\lambda_i = (2i^2+1)!$ then define the other $p_{i,j} $ by 
\begin{equation} \label{eqn:exrever}
p_{i,i+1} = \frac{\lambda_{i} - \lambda_{i-1} p_{i-1,i}}{\lambda_i}
\end{equation}
By induction, using the fact that $\lambda_{i+1} > \lambda_i$ we see that indeed $p_{i,i+1} \in (0,1)$ for all $i$, so $p$ is irreducible. Furthermore from equation (\ref{eqn:exrever}) we see that $p$ is reversible with respect to $\lambda$, so this $\lambda$ is a candidate to generate an invariant measure for conservative particle systems. Now define the function $b$:
\begin{equation*}
b(n,k) = \frac{1}{ (2 (k+1))! }
\end{equation*}
and note that this function is bounded, so that there is a corresponding Markov process. Furthermore assumptions \ref{Ass:1} and \ref{Ass:2} are satisfied.

\bigskip

Define the product invariant measure $\PR = \mu_\lambda$ on $\N^\N$ associated to $\lambda$ by giving the coefficients $a_k$.
\begin{equation*}
a_k = \prod_{i=0}^k \frac{1}{(2i)!}
\end{equation*}
The one site marginal is as before $\PR[\eta_i = k] = a_k \lambda_i^k /Z_{\lambda_i}$. A small calculation shows that
\begin{align*}
\frac{\PR[\eta_k = k^2+n]}{\PR[\eta_k = k^2]} & < \frac{1}{(2k^2+2)^n} \\
\frac{\PR[\eta_k = k^2-n]}{\PR[\eta_k = k^2]} & < \frac{1}{(2k^2+1)^n} 
\end{align*}

Note that the sequence $\frac{a_{k+1}}{a_k}$ is decreasing. A moments thought shows that in this case corollary \ref{Cor:basictrivialitydemand} is in fact the strongest case of theorem \ref{The:trivialitydemand}.

Hence by equation (\ref{eqn:gebruikM*}) we see that
\begin{align*}
\sum_k \sum_n \PR[\eta_k = n] \wedge \PR[\eta_k = n+1] & = \sum_k 1 - \PR[\eta_k = k^2] \\
&= \sum_k \PR[\eta_k \neq k^2]
\end{align*}
So by Borel-Cantelli and corollary \ref{Cor:basictrivialitydemand} we see that $\PR[\eta_k \neq k^2 \text{ infinitely often}] = 1$ implies that $\mathcal{A}$ is trivial under $\mu_\lambda$. We now show that we can reverse the implication as well. Suppose that $\PR[\eta_k \neq k^2 \text{ infinitely often}] = 0$. Define $A_n = \{\eta \; : \; \sum_k (\eta_k - k^2) = n \}$ and note that $A_n \in \mathcal{A}$, furthermore a calculation shows that these sets have positive $\mu_\lambda$ measure, so indeed $\mathcal{A}$ is not trivial. So in this case we obtain a strengthening of corollary \ref{Cor:basictrivialitydemand} to
\begin{equation}
\mathcal{A} \text{ is trivial under } \mu \quad \Leftrightarrow \quad \PR[\eta_k \neq k^2 \text{ infinitely often}] = 1.
\end{equation}

We check which of the two possibilities is the case here.
\begin{align*}
& \sum_k 1 - \PR[\eta_k = k^2] \\
& \quad = \sum_k \left[\sum_{1 \leq n \leq k} \PR[\eta_k = k^2-n] + \sum_{n \geq 1} \PR[ \eta_k = k^2 +n]  \right] \\
& \quad \leq \sum_k \PR[\eta_k = k^2] \left[\sum_{1 \leq n \leq k} \frac{1}{(2k^2 + 1)^n}+ \sum_{n \geq 1} \frac{1}{(2k^2 + 2)^n} \right] \\
& \quad = \sum_k \PR[\eta_k = k^2] \left[\frac{1}{2k^2+1} \frac{2k^2+1}{2k^2} + \frac{1}{2k^2+2} \frac{2k^2+2}{2k^2+1} \right] \\
& \quad \leq 2 \sum_k \frac{1}{2k^2} \\
& \quad < \infty
\end{align*}
So we see that $\mathcal{A}$ is not trivial which in turn implies that $\mu_\lambda$ is not extremal and can be disintegrated into measures $\mu^{(n)}$ supported on the sets $A_n = \{\eta \; : \; \sum_k (\eta_k - k^2) = n \}$:
\begin{equation*}
\mu_\lambda(\cdot) = \sum_{n \in \Z} \mu^{(n)}(\cdot) \mu_\lambda(A_n)
\end{equation*}
where $\mu_\lambda(A_n) > 0$ for all $n$.

\section{Anti-particle perspective} \label{Sect:antiparticle}

In this section we elaborate on the symmetric nature of theorem \ref{The:ergodicproduct} (a). We work with a system where $L^{b,p}f(\eta) = \sum_{x,y} p(x,y) b(\eta_x,\eta_y) \nabla_{x,y}f(\eta)$ in the case that $W = \{0,\dots,N\}$. 

\bigskip

In the process particles jump from $x$ to $y$ with rate $p(x,y)b(\eta_x,\eta_y)$. Instead of saying that a particle jumps from $x$ to $y$ one could say that an anti-particle jumps from $y$ to $x$. The next step is to forget about the motion of the particles, and only look at the motion of the anti-particles. It is clear that this motion contains exactly the same information as the motion of the particles. 
This way of looking at the system has some consequences. Define the transformation $\theta$ and the following adapted versions of $b$ and $p$:
\begin{align*}
(\theta \eta)_x & = N - \eta_x \\ 
\tilde{b}(n,k) & = b(N-k,N-n) \\
\tilde{p}(x,y) & = p(y,x) \\
\end{align*}
The motion of the antiparticles can be described by these adapted versions. Clearly if there are $n$ particles at some site, then there are $N-n$ antiparticles, this explains the definition of $\theta$. A transition of a particle from $x$ to $y$ with rate $p(x,y)b(\eta_x,\eta_y)$ corresponds to the transition of an anti-particle from $y$ to $x$ with rate $p(x,y)b(\eta_x,\eta_y)$. We would like to rewrite this rate in such a way that we recognize it in a familiar form. First of all the anti-particle moves from $y$ to $x$ so we turn $p$ around: $\tilde{p}$. Also we must write $\eta_x$ and $\eta_y$ in anti-particle form, and we must write it in such a way that the first coordinate describes the number of anti-particles at the site of departure. This leads us to the rate of moving an anti-particle from $y$ to $x$ with rate $p(x,y)b(\eta_x,\eta_y) = \tilde{p}(y,x) \tilde{b}(N-\eta_y,N-\eta_x) = \tilde{p}(y,x) \tilde{b}((\theta \eta)_y,(\theta \eta)_x)$. Thus this model can be described by the generator
\begin{equation} \label{eqn:Lduality}
L^{\tilde{b},\tilde{p}}g(\theta \eta) = \sum_{x,y} \tilde{p}(x,y) \tilde{b}((\theta \eta)_x,(\theta \eta)_y) \nabla_{x,y} g(\theta \eta),
\end{equation}
which is exactly the form we are used to. Note that the assumptions that hold for the model associated with $L^{b,p}$ also hold for $L^{\tilde{b},\tilde{p}}$. Hence $L^{\tilde{b},\tilde{p}}$ generates a semigroup $\tilde{S}_t$. Without to much work we see that equation (\ref{eqn:Lduality}) leads to $L^{b,p}(f \circ \theta)(\eta) = L^{\tilde{b},\tilde{p}}f(\theta \eta)$.

\bigskip

We restate the results in theorem \ref{The:antiparticle}. 

\begin{The} \label{The:antiparticle}
For $f \in D$ it holds that
\begin{equation*}
L^{b,p}(f \circ \theta)(\eta) = L^{\tilde{b},\tilde{p}}f(\theta \eta).
\end{equation*}
For $f \in \overline{D} = C(E)$ it holds that
\begin{equation*}
S_t(f \circ \theta)(\eta) = \tilde{S}_tf(\theta \eta)
\end{equation*}
\end{The}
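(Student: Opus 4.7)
The strategy splits into a generator identity and a semigroup extension. The first is a direct algebraic computation that hinges on a single observation about how $\theta$ interacts with the jump operation $\eta\mapsto\eta^{x,y}$. The second then follows from the fact that $D$ is a core for both $L^{b,p}$ and $L^{\tilde b,\tilde p}$, by a standard conjugation-of-semigroups argument.

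For the first statement I would expand the definition and write
$$L^{b,p}(f\circ\theta)(\eta)=\sum_{x,y}p(x,y)\,b(\eta_x,\eta_y)\bigl[f(\theta\eta^{x,y})-f(\theta\eta)\bigr].$$
The key identity is $\theta(\eta^{x,y})=(\theta\eta)^{y,x}$, which is a coordinatewise check: $(\theta\eta^{x,y})_x=N-(\eta_x-1)=(\theta\eta)_x+1$ and similarly at $y$, while all other coordinates are unchanged. Combined with $\eta_x=N-(\theta\eta)_x$ this turns each summand into $p(x,y)\,b(N-(\theta\eta)_x,N-(\theta\eta)_y)\,\nabla_{y,x}f(\theta\eta)$. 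Renaming the summation indices $x\leftrightarrow y$ and using $\tilde p(x,y)=p(y,x)$ together with $\tilde b((\theta\eta)_x,(\theta\eta)_y)=b(N-(\theta\eta)_y,N-(\theta\eta)_x)$ produces
$$\sum_{x,y}\tilde p(x,y)\tilde b((\theta\eta)_x,(\theta\eta)_y)\nabla_{x,y}f(\theta\eta)=L^{\tilde b,\tilde p}f(\theta\eta).$$
The rearrangement is legal because $p$ is finite range and $f\in D$, so for each $\eta$ only finitely many pairs $(x,y)$ contribute.

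For the second statement I would introduce the involutive linear isometry $U\colon C(E)\to C(E)$ given by $Uf=f\circ\theta$. Since $\theta$ only permutes the possible values at each coordinate, $U$ preserves $D$ and is a bijection there, and the first part rewrites as the intertwining $L^{b,p}U=UL^{\tilde b,\tilde p}$ on $D$. Now consider the semigroup $T_t:=US_tU$ on $C(E)$. It is strongly continuous and contractive, and differentiating at $t=0$ gives $UL^{b,p}Uf=L^{\tilde b,\tilde p}f$ for $f\in D$, so the generator of $T_t$ agrees with that of $\tilde S_t$ on the common core $D$. By the Hille--Yosida uniqueness of the semigroup generated by a closed operator, $T_t=\tilde S_t$, i.e.\ $US_tU=\tilde S_t$ on $C(E)$. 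Applying $U$ once more and evaluating at $\eta$ delivers the claim $S_t(f\circ\theta)(\eta)=\tilde S_t f(\theta\eta)$.

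The only nontrivial step is the last uniqueness argument; the rest is bookkeeping. Two small points to be careful about are (i) checking that $U$ maps $D$ into itself with $\tn{Uf}=\tn f$, which is immediate from the definition of $\Delta_f(i)$ since $\theta$ only permutes values at each site, and (ii) that Assumption \ref{Ass:Cp} carries over to $(\tilde b,\tilde p)$, so the approximation theorem I.3.9 of Liggett which produced $S_t$ also produces $\tilde S_t$; this is obvious from the definitions of $\tilde b$ and $\tilde p$.
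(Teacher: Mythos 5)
Your proof is correct and follows essentially the same route as the paper: the generator identity comes from the jump-rate correspondence $p(x,y)b(\eta_x,\eta_y)=\tilde p(y,x)\tilde b((\theta\eta)_y,(\theta\eta)_x)$ together with $\theta(\eta^{x,y})=(\theta\eta)^{y,x}$ and a relabelling of the sum, exactly the computation the paper sketches informally before stating the theorem. The semigroup statement, which the paper dismisses with ``without too much work,'' you handle by the standard core/uniqueness argument via the conjugation $U f = f\circ\theta$; this is the intended argument, just written out in full.
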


As a consequence we obtain that if $\{ \eta(t) \; : \; t \geq 0 \}$ is the Markov process generated by $L^{b,p}$, then $\{ (\theta \eta)(t) \; : \; t \geq 0 \}$ is the process with generator $L^{\tilde{b},\tilde{p}}$.

\subsection{Anti-particle perspective on product measures}

We look at how to interpret proposition \ref{prop:invariantmeasures} from this perspective. 

We start by looking at the case that $b$ is independent of the second variable, the case corresponding to proposition \ref{prop:invariantmeasures} (a) where $b(n,k) = b(n,0)$ for all $n$ and $k$. These $b$ are not suitable for the case that $W$ is a finite set, because we need the fact that $b(\cdot, N) = 0$.

\bigskip

For the cases (b) and (c) in proposition \ref{prop:invariantmeasures} the $\lambda$ that generate invariant measures satisfy one of the following two conditions.
\begin{itemize}
\item If $x \nsim y$ then $\lambda_x = \lambda_y$. 
\item For all $x$ and $y$ it holds that $x \thicksim y$.
\end{itemize}
In this case denote by $\pi = \frac{c}{\lambda}$ for some $c \geq 0$. Suppose that $\lambda$ solves 
\begin{equation*}
\sum_x \lambda_x p(x,y) = \lambda_y \sum_x  p(y,x).
\end{equation*}
Then it is an easy calculation that $\pi$ solves 
\begin{equation*}
\sum_x \pi_x \tilde{p}(x,y) = \pi_y \sum_x  \tilde{p}(y,x).
\end{equation*}
Furthermore if $\lambda$ is reversible with respect to $p$ then $\pi$ is reversible with respect to $\tilde{p}$ and the property that: if $x$ and $y$ are such that $\lambda_x p(x,y) \neq \lambda_y p(y,x)$, then $\lambda_x = \lambda_y$ is turned into: if $x$ and $y$ are such that $\pi_x \tilde{p}(x,y) \neq \pi_y \tilde{p}(y,x)$, then $\pi_x = \pi_y$.

\bigskip

We look at how the second item in proposition \ref{prop:invariantmeasures} should be interpreted for the anti-particle model. One would think that the anti-particle perspective would give invariant measures for functions $b$ so that $b(n,k) - b(k,n) = b(N,k) - b(N,n)$, but notice that a small calculation using assumption \ref{Ass:1} yields that $b(N,k) - b(N,n) = b(n,0) - b(k,0)$. 

The associated product measures must change too. Denote the values $\tilde{a}_n$ analogous to the values of $a_n$:
\begin{align*}
\tilde{a}_0 & = 1 \\
\tilde{a}_k & = \prod_{i=0}^{k-1} \frac{\tilde{b}(1,i)}{\tilde{b}(i+1,0)}  \\
\tilde{Z}_\pi & = \sum_k \pi^k \tilde{a}_k \\
\end{align*}
and define the product measure $\nu_\pi$ by its marginals $\nu_\pi(n) = \tilde{a}_n \pi_x^n \tilde{Z}^{-1}_{\pi_x}$. One would hope that an invariant product measure for $L^{b,p}$ given by $\mu_\lambda$ is also given by the invariant measure for $L^{\tilde{b},\tilde{p}}$ written by $\nu_\pi$ for some suitable $\pi$. This is indeed the case.

\begin{Prop} \label{prop:antiparticlemeaure}
Let $\pi = \frac{1}{\lambda} \frac{b(N,0)}{b(1,N-1)}$ then it holds that $\mu_\lambda(\dd \eta) = \nu_\pi(\dd \theta \eta)$.
\end{Prop}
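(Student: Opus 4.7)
The plan is to exploit the product structure: both $\mu_\lambda$ and the pushforward of $\nu_\pi$ under $\theta^{-1}$ are product measures on $W^S$, since $\theta$ acts coordinate by coordinate ($(\theta\eta)_x = N - \eta_x$ involves only the $x$-coordinate). So the statement reduces to verifying the one-site identity
\begin{equation*}
\mu_{\lambda_x}(n) = \nu_{\pi_x}(N - n) \qquad \text{for all } n \in \{0,\dots,N\} \text{ and all } x \in S.
\end{equation*}
Both sides are probability measures on $\{0,\dots,N\}$, so it is enough to show that $a_n \lambda_x^n$ and $\tilde{a}_{N-n} \pi_x^{N-n}$ are proportional, with a proportionality constant independent of $n$ (the normalisations then take care of themselves).

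Fix $x$ and write $\lambda = \lambda_x$, $\pi = \pi_x$. I would check proportionality by looking at consecutive ratios: the claim is equivalent to
\begin{equation*}
\frac{\tilde{a}_{N-n-1}\,\pi^{N-n-1}}{a_{n+1}\,\lambda^{n+1}} = \frac{\tilde{a}_{N-n}\,\pi^{N-n}}{a_n\,\lambda^n} \qquad \text{for } 0 \le n \le N-1,
\end{equation*}
which rearranges to
\begin{equation*}
\frac{\tilde{a}_{N-n-1}}{\tilde{a}_{N-n}} \cdot \frac{a_n}{a_{n+1}} = \pi\lambda.
\end{equation*}
Using $a_n/a_{n+1} = b(n+1,0)/b(1,n)$ and the definition $\tilde{b}(n,k) = b(N-k,N-n)$, a short computation gives $\tilde{a}_{N-n-1}/\tilde{a}_{N-n} = \tilde b(N-n,0)/\tilde b(1,N-n-1) = b(N,n)/b(n+1,N-1)$. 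So the identity to verify becomes
\begin{equation*}
\frac{b(N,n)\,b(n+1,0)}{b(n+1,N-1)\,b(1,n)} = \pi\lambda = \frac{b(N,0)}{b(1,N-1)},
\end{equation*}
where in the last equality I have used the definition $\pi = \lambda^{-1} b(N,0)/b(1,N-1)$.

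This last identity is precisely Assumption \ref{Ass:1} applied with $i = n$, $j = N$, which reads $b(n+1,N-1)/b(N,n) = [b(1,N-1)/b(N,0)]\cdot[b(n+1,0)/b(1,n)]$; rearranging yields exactly what is needed. So the whole argument is essentially bookkeeping once one has the right candidate for $\pi$. The only non-routine point is that the correct $\pi$ is forced on us by requiring the ratio identity to hold for all $n$ simultaneously — in particular, evaluating at $n = 0$ (where $a_0 = \tilde{a}_0 = 1$) dictates the product $\pi\lambda$, and Assumption \ref{Ass:1} guarantees that this single choice works for every $n$.
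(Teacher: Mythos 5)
Your proof is correct and follows essentially the same route as the paper: reduce to a single marginal and verify that the unnormalised weights $a_n\lambda^n$ and $\tilde{a}_{N-n}\pi^{N-n}$ are proportional, with Assumption \ref{Ass:1} (applied with $j=N$) supplying the key identity. The only difference is bookkeeping — the paper telescopes to the closed form $\tilde{a}_n = \left(\tfrac{b(1,N-1)}{b(N,0)}\right)^n a_N^{-1} a_{N-n}$ and substitutes, whereas you check proportionality via consecutive ratios; both arguments are otherwise identical.
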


\begin{proof}
We only need to prove the statement for a single marginal. So we assume that $\lambda$ and $\pi$ only have one index and prove that $\mu_\lambda(N-n) = \nu_\pi(n)$.
Recall assumption \ref{Ass:1} which we use to obtain the third line in the following computation.
\begin{align*}
\tilde{a}_n & = \prod_{i=0}^{n-1} \frac{\tilde{b}(1,i)}{\tilde{b}(i+1,0)} \\
& = \prod_{i=0}^{n-1} \frac{b(N-i,N-1)}{b(N,N-i-1)} \\
& = \prod_{i=0}^{n-1} \frac{b(N-i,0)}{b(1,N-i-1)} \frac{b(1,N-1)}{b(N,0)} \\
& = \left( \frac{b(1,N-1)}{b(N,0)} \right)^n \; \; \prod_{i=0}^{n-1} \frac{b(N-i,0)}{b(1,N-i-1)} \\
& = \left( \frac{b(1,N-1)}{b(N,0)} \right)^n \; \; \left[ \prod_{i=0}^{N-1} \frac{b(N-i,0)}{b(1,N-i-1)}\right] \left[ \prod_{i=n}^{N-1} \frac{b(N-i,0)}{b(1,N-i-1)} \right]^{-1} \\
& = \left( \frac{b(1,N-1)}{b(N,0)} \right)^n a_N^{-1} a_{N-n}
\end{align*}
It follows that
\begin{align*}
\tilde{a}_n \pi^n & = \left( \frac{b(1,N-1)}{b(N,0)} \right)^n a_N^{-1} a_{N-n} \left(\frac{1}{\lambda}\frac{b(N,0)}{b(1,N-1)} \right)^n   \\
& =   a_N^{-1} a_{N-n} \lambda^{N-n} \left(\frac{1}{\lambda} \right)^N 
\end{align*}
So if we use this information to calculate the probabilities we obtain
\begin{equation*}
\nu_\pi(n) = \frac{\tilde{a}_n \pi^n}{\tilde{Z}_\pi} = \frac{a_{N-n} \lambda^{N-n}}{Z_\lambda} = \mu_\lambda(N-n)
\end{equation*}
which is what we wanted to prove.
\end{proof}

Lastly we comment on the symmetric nature of theorem \ref{The:ergodicproduct} (a). Using $c = \frac{b(N,0)}{b(1,N-1)}$, the rewrite using the antiparticle model gives
\begin{equation*}
\sum_{i : \lambda_i < 1} \lambda_i + \sum_{i: \lambda_i \geq 1} \frac{1}{\lambda_i} = c \sum_{i : \pi_i > c} \frac{1}{\pi_i} + c^{-1} \sum_{i: \pi_i \leq c} \pi_i = \infty
\end{equation*}
which is equivalent to
\begin{equation*}
\sum_{i : \pi_i < 1} \pi_i + \sum_{i: \pi_i \geq 1} \frac{1}{\pi_i} = \infty
\end{equation*}
The sum over the $\lambda_i$ where $\lambda_i <1$ turns to the sum over the $\pi_i^{-1}$ where $\pi \geq 1$. The sum over the $\lambda_i^{-1}$ where $\lambda_i \geq 1$ turns to the sum over the $\pi_i$ where $\pi < 1$. 

\bigskip

\textbf{Acknowledgment}

The author would like to thank Frank Redig for fruitful discussions and for reading of and comments on the manuscript.

\bibliography{mybib}{}

\begin{thebibliography}{10}

\bibitem{ArtAldousPitman}
David Aldous and Jim Pitman.
\newblock On the zero-one law for exchangeable events.
\newblock {\em The Annals of Probability}, 7(4):704, 1979.

\bibitem{AnCoRoMisanthrope}
E.~Andjel, C.~Cocozza-Thivent, and M.~Roussignol.
\newblock Quelques compl\'ements sur le processus des misanthropes et le
  processus ``z\'ero-range''.
\newblock {\em Ann. Inst. H. Poincar\'e Probab. Statist.}, 21(4):363--382,
  1985.

\bibitem{AndjelZR}
Enrique~Daniel Andjel.
\newblock Invariant measures for the zero range process.
\newblock {\em The Annals of Probability}, 10(3):525, 1982.

\bibitem{LiggettBramson}
M.~Bramson and T.M. Liggett.
\newblock {Exclusion processes in higher dimensions: stationary measures and
  convergence.}
\newblock {\em Ann. Probab.}, 33(6):2255--2313, 2005.

\bibitem{CocozzaMisanthrope}
Christiane Cocozza-Thivent.
\newblock Processus des misanthropes.
\newblock {\em Probability Theory and Related Fields}, 70:509--523, 1985.
\newblock 10.1007/BF00531864.

\bibitem{EthierKurtz}
Stewart~N. Ethier and Thomas~G. Kurtz.
\newblock {\em Markov processes: Characterization and Convergence}.
\newblock Wiley, 1986.

\bibitem{GroRedVafInc}
Stefan Grosskinsky, Frank Redig, and Kiamars Vafayi.
\newblock Condensation in the inclusion process and related models.
\newblock {\em Journal of Statistical Physics}, 142:952--974, 2011.
\newblock 10.1007/s10955-011-0151-9.

\bibitem{IosifescuTail}
Marius Iosifescu.
\newblock On finite tail $\sigma$-algebras.
\newblock {\em Probability Theory and Related Fields}, 24:159--166, 1972.

\bibitem{JonesJonesNumberTheory}
Gareth~A. Jones and J.~Mary Jones.
\newblock {\em Elementary number theory}.
\newblock Springer-Verlag, 1998.

\bibitem{Jung}
Paul Jung.
\newblock Extremal reversible measures for the exclusion process.
\newblock {\em Journal of Statistical Physics}, 112:165--191, 2003.
\newblock 10.1023/A:1023679620839.

\bibitem{Kallenberg}
Olav Kallenberg.
\newblock {\em Foundations of Modern Probability}.
\newblock Springer-Verlag, second edition, 2002.

\bibitem{BookLiggett1}
Thomas~M. Liggett.
\newblock {\em Interacting Particle Systems}.
\newblock Springer-Verlag, 1985.

\bibitem{BookLiggett2}
Thomas~M. Liggett.
\newblock {\em Stochastic Interacting Systems: Contact, Voter and Exclusion
  Processes}.
\newblock Springer-Verlag, 1999.

\bibitem{LindvallCoupling}
Torgny Lindvall.
\newblock {\em Lectures on the coupling method}.
\newblock Dover Publications, Inc., 1992.

\bibitem{ArtSaadaLTSEP}
Ellen Saada.
\newblock A limit theorem for the position of a tagged particle in a simple
  exclusion process.
\newblock {\em The Annals of Probability}, 15(1):375, 1987.

\bibitem{ArtSaadaZRtaggedparticle}
Ellen Saada.
\newblock Processus de zero-range avec particule marquee.
\newblock {\em Annales de l'Institut Henri Poincar\`{e}, B}, 26(1):5, 1990.

\bibitem{SSEx}
Sunder Sethuraman.
\newblock On extremal measures for conservative particle systems.
\newblock {\em Ann. Inst. H. Poincare, Probabilites et Statistiques},
  37(2):139, 2001.

\bibitem{Spitzer}
Frank Spitzer.
\newblock {\em Principles of Random Walk}.
\newblock Springer-Verlag, 1976.

\bibitem{Thorisson}
Hermann Thorisson.
\newblock {\em Coupling, stationarity, and regeneration}.
\newblock Springer-Verlag, 2000.

\bibitem{UspenskyHeasletElNumberTheory}
J.V. Uspensky and M.A. Heaslet.
\newblock {\em Elementary number Theory}.
\newblock McGraw-Hill Book Company Inc., 1939.

\end{thebibliography}
\bibliographystyle{plain}

\end{document}